\newcommand{\R}{{{\mathbb R}}}
\newcommand{\Z}{{{\mathbb Z}}}
\newcommand{\T}{{{\mathbb T}}}
\newtheorem{ej}{Example}[section]
\newtheorem{lema}{Lemma}[section]
\newtheorem{teor}{Theorem}[section]
\newtheorem{propo}{Proposition}[section]
\newtheorem{definicion}{Definition}[section]
\def\qed{\hbox to 0pt{}\hfill$\rlap{$\sqcap$}\sqcup$\medbreak}
\def\W{$W_\Delta^{1,p}(J)$}
\title{Variational approach to second-order impulsive dynamic equations on time scales}
\author{{\sc Victoria Otero--Espinar$^1$\footnote{Corresponding author, E-mail: mvictoria.otero@usc.es} \footnote{The first author was partially supported by FEDER and Ministerio de Educaci\'on y Ciencia, Spain, project MTM2010-15314.}} and {\sc Tania Pernas--Casta\~no$^{1,2}$} \\
{$^1$ Departamento de
An\'alise Matem\'atica,}\\ 
{Universidade de Santiago de Compostela, 15782 Santiago de Compostela,
}\\{Galicia, Spain}\\
{$^2$ Instituto de Ciencias Matem\'aticas (CSIC, UAM, UC3M, UCM),}\\ 
{ 28049 Madrid, Spain
}\\
E--mail: mvictoria.otero@usc.es,  tania.pernas@icmat.es. }
\begin{document}
\date{}
\maketitle

\begin{abstract}
The aim of this paper is to employ variational techniques and critical point theory to prove some conditions for the existence of solutions to nonlinear impulsive dynamic equation with homogeneous Dirichlet boundary conditions. Also we will be interested in the solutions of the impulsive nonlinear problem with linear derivative dependence satisfying an impulsive condition.
\end{abstract}

{\bf Keywords:} Impulsive dynamic equations; Second-order boundary value problem; Variational techniques, Critical point theory; Time Scales.

{\bf AMS Classification:} 34B37; 34N05

\section{Introduction}

This paper is concerned with the existence of solutions of second order impulsive
dynamic equations on time scales. More precisely, we consider the following boundary value problem:
$$
(P)\left\{\begin{array}{lcl}
-u^{\Delta \Delta}(t)+\lambda u^{\sigma}(t)=f(t,u^{\sigma}(t));& & \textrm{$\Delta$-a.e.}\quad t\in [0,T]_{\T}^{\kappa^{2}},\\
\noalign{\bigskip} u(0)=0=u(T),\\
\noalign{\bigskip} u^{\Delta}(t_{j}^{+})-u^{\Delta}(t_{j}^{-})=I_{j}(u(t_{j}^{-})),& & j=1,2,\cdots,p.
\end{array}
\right .
$$
Where the impulsive points $t_{j}\in J$ are right-dense points in an arbitrary time scale $\T$, with $t_{0}=0<t_{1}<t_{2}<\cdots,t_{p}<t_{p+1}=T$.
Here $f:[0,T]_{\T}\times\R\to\R$ and $I_{j}:\R\to\R$, $j=1,\cdots,p$, are continuous functions.

It is well known that the theory of impulsive dynamic equations provides a natural framework for mathematical modeling of many real world phenomena.  The impulsive effects exist widely in many evolution processes in which their states are changed abruptly at certain moments of time.

Applications of impulsive dynamic equations arise in biology (biological phenomena involving thresholds), medicine (bursting rhythm models), pharmacokinetics, mechanics, engineering, chaos theory,... As a consequence, there has been a significant development in impulse theory in recent years. 
We can see some general and recent works on the theory of impulsive differential equations; see \cite{J,kkr,gg,nr,erb,xnl,xra,yra,mar} and the references therein.

For a second order dynamic equation, we usually considers impulses in the position and the velocity. 
However, in the motion of spacecraft one has to consider  instantaneous impulses depending on the position that result in jump discontinuities in velocity, but with no change in position. The impulses only on the velocity, occur also in impulsive mechanics. An impulsive problem with impulses in the derivative is considered in \cite{afr}.

Moreover, we will be interested in the solutions of the impulsive nonlinear problem in time scale with derivative dependence satisfying an impulsive condition. We can see, for example, recent works on the theory of impulsive differential equations in \cite{J,gg,xnl,yra,su}.

There have been several approaches to study solutions of impulsive dynamics equations on time scales, such as the method of lower and upper solutions, fixed-point theory \cite{bno,cw,k}.  Sobolev spaces of functions on time scales, which were first introduced in \cite{aopv1} opened a very fruitful new approach in the study of dynamic equations on time scales: the use of variational methods in the context of the boundary value problems on time scales (see \cite{aopv2,aopv3}) or in second order hamiltonian systems \cite{zhou}. Moreover, the study of the existence and multiplicity of solutions for impulsive dynamic equations on time scales has also been done by means of varational method(see, for example, \cite{zwl,df}).

The aim of this paper is to use variational techniques and critical point theory to derive the existence of multiple solutions to $(P)$; we refer the reader to \cite{bp,cie,sla1,sla2} for a broad introduction to dynamic equations on time scales and to \cite{r,ma} for variational methods and critical point theory.

The paper is organized as follows. In Section 2 we gather together essential properties about Sobolev spaces on time scales proved in  \cite{aopv1,aopv4,cre} which one needs to read this paper.

The goal of Section 3 is to exhibit the variational formulation for the impulsive Dirichlet problem. As we will see, all these problems can be understood and solved in terms of the minimization of a functional, usually related to the energy, in an appropriate space of functions. The results presented in the part where we address the linear problem, are basic but crucial to reveal that a problem can be solved by finding the critical points of a functional. Moreover, we prove some sufficient conditions for the existence of at least one positive solution to $(P)$. 

To finish, in Section 4, we present an impulsive nonlinear problem with linear derivative dependence.
We transform the problem into an equivalent one that has no dependence on the derivative and then we prove that the problem has at least one solution.
 Also, with additional conditions in nonlinearities and impulse functions,  we can show the existence of at least two solutions by using the Mountain Pass Theorem.

\section{Preliminaries}

Let $\T$ be an arbitrary time scale.   We assume that $\mathrm{\T}$ has the topology that it inherits from the standard topology on $\R$. Assume that $a<b$ are points in ${\mathrm{\T}}$ and define the time scale interval $ [a,b]_{\mathrm{\T}}=\{t\in \mathrm{\T}:a\leq t\leq b\}$. We denote by $J^{0}=[a,b)_{\mathrm{\T}}$.

Below we set out some results proved in \cite{aopv1,aopv4} about Sobolev spaces on time scales.
\begin{definicion}
Let $p\in\bar{\R}$ be such that $p\ge 1$ and $u: J\to\bar{\R}$. We say that $u$ belongs to $W_{\Delta}^{1,p}(J)$ if and only if $u\in L_{\Delta}^{p}(J^{0})$ and there exists $g:J^{\kappa}\to\bar{\R}$ such that $g\in L_{\Delta}^{p}(J^{0})$ and
\begin{displaymath}
	\int_{J^{0}} (u\cdot \varphi^{\Delta})(s)\Delta s = - \int_{J^{0}}(g\cdot  \varphi^{\sigma})(s)\Delta s\qquad \forall \varphi \in \mathcal{C} _{0,rd}^{1} (J^{k})
	\end{displaymath}
	with
	\begin{displaymath}
	\mathcal{C}_{0,rd}^{1}(J^{k})=\{ u:J\longrightarrow\mathbb{R}: u\in\mathcal{C}_{rd}^{1}(J^{k}), u(a)=u(b)=0\},
	\end{displaymath}
	and $\mathcal{C}_{rd}^{1}(J^{\kappa})$ is the set of all continuous functions on $J$ such that they are $\Delta$-differentiable on $J^{\kappa}$ and their $\Delta$-derivatives are $rd$-continuous on $J^{\kappa}$.
\end{definicion}

\begin{teor}
Assume $p\in\bar{\R}$ and $p\ge 1$. The set \W is a Banach space with the norm defined for every $x\in$\W as
\begin{equation}
\label{norm}
\Arrowvert x\Arrowvert_{W_{\Delta}^{1,p}}:=\Arrowvert x\Arrowvert_{ L_{\Delta}^{p}}+\Arrowvert x^{\Delta}\Arrowvert_{ L_{\Delta}^{p}}.
\end{equation}

Moreover, the set $H_{\Delta}^{1}(J):= W_{\Delta}^{1,2}(J)$ is a Hilbert space  with the inner product given for every $(x,y)\in H_{\Delta}^{1}(J)\times H_{\Delta}^{1}(J)$ by
\begin{equation}
\label{pi}
(x,y)_{H_{\Delta}^{1}}:=(x,y)_{ L_{\Delta}^{2}}+(x^{\Delta},y^{\Delta})_{ L_{\Delta}^{2}}.
\end{equation}
\end{teor}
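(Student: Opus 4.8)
The plan is to establish that $W_{\Delta}^{1,p}(J)$ is a Banach space by the standard route: verify that $\|\cdot\|_{W_{\Delta}^{1,p}}$ is a genuine norm, and then show completeness by taking an arbitrary Cauchy sequence and producing a limit inside the space. For the Hilbert space claim, once completeness for $p=2$ is in hand, it suffices to check that the bilinear form \eqref{pi} is an inner product inducing the norm \eqref{norm}, which is essentially immediate from the corresponding properties of $(\cdot,\cdot)_{L_{\Delta}^{2}}$.

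First I would dispose of the norm axioms. Positivity, homogeneity and the triangle inequality for $\|x\|_{L_{\Delta}^{p}}+\|x^{\Delta}\|_{L_{\Delta}^{p}}$ follow termwise from the fact that $\|\cdot\|_{L_{\Delta}^{p}}$ is a norm on $L_{\Delta}^{p}(J^{0})$ (a result from \cite{aopv1} that I would cite); the only genuinely nontrivial point is that $\|x\|_{W_{\Delta}^{1,p}}=0$ forces $x=0$ in the appropriate sense, which comes from $\|x\|_{L_{\Delta}^{p}}=0 \Rightarrow x=0$ $\Delta$-a.e. The delicate structural issue, which I expect to be the main obstacle, is the interplay between the $\Delta$-a.e. equivalence classes in $L_{\Delta}^{p}$ and the requirement that elements of $W_{\Delta}^{1,p}(J)$ have a well-defined weak $\Delta$-derivative: one must use the uniqueness (up to $\Delta$-a.e. equality) of the function $g$ in the defining identity $\int_{J^{0}} u\,\varphi^{\Delta} = -\int_{J^{0}} g\,\varphi^{\sigma}$, so that $x\mapsto x^{\Delta}$ is well defined on equivalence classes. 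This uniqueness is exactly the kind of fact recorded in \cite{aopv1,aopv4}, and I would invoke it explicitly.

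Next, for completeness, I would take a Cauchy sequence $(x_{n})$ in $W_{\Delta}^{1,p}(J)$. Then $(x_{n})$ and $(x_{n}^{\Delta})$ are each Cauchy in the Banach space $L_{\Delta}^{p}(J^{0})$, hence converge there to some $x$ and $y$ respectively. The crux is to show $y$ is the weak $\Delta$-derivative of $x$: fix $\varphi\in\mathcal{C}_{0,rd}^{1}(J^{\kappa})$, write the identity $\int_{J^{0}} x_{n}\,\varphi^{\Delta} = -\int_{J^{0}} x_{n}^{\Delta}\,\varphi^{\sigma}$, and pass to the limit. Passage to the limit is justified by Hölder's inequality on time scales together with the fact that $\varphi^{\Delta}$ and $\varphi^{\sigma}$ are bounded (being $rd$-continuous on a compact time-scale interval), so both sides converge to the corresponding integrals against $x$ and $y$. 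This yields $x\in W_{\Delta}^{1,p}(J)$ with $x^{\Delta}=y$, and then $\|x_{n}-x\|_{W_{\Delta}^{1,p}} = \|x_{n}-x\|_{L_{\Delta}^{p}} + \|x_{n}^{\Delta}-y\|_{L_{\Delta}^{p}} \to 0$.

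Finally, for the Hilbert space statement with $p=2$: bilinearity and symmetry of $(x,y)_{H_{\Delta}^{1}}$ are inherited from $(\cdot,\cdot)_{L_{\Delta}^{2}}$; positive-definiteness follows since $(x,x)_{H_{\Delta}^{1}} = \|x\|_{L_{\Delta}^{2}}^{2} + \|x^{\Delta}\|_{L_{\Delta}^{2}}^{2} \ge 0$ with equality only if $x=0$ $\Delta$-a.e. It remains to note that this inner product induces a norm equivalent to \eqref{norm} — indeed $\sqrt{\|x\|_{L_{\Delta}^{2}}^{2}+\|x^{\Delta}\|_{L_{\Delta}^{2}}^{2}}$ and $\|x\|_{L_{\Delta}^{2}}+\|x^{\Delta}\|_{L_{\Delta}^{2}}$ differ only by a factor in $[1,\sqrt{2}]$ — so completeness in the $W_{\Delta}^{1,2}$ norm transfers to completeness in the inner-product norm, making $H_{\Delta}^{1}(J)$ a Hilbert space. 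I would remark that all the time-scale-specific input (the structure of $L_{\Delta}^{p}$, Hölder's inequality, and uniqueness of the weak derivative) is borrowed from \cite{aopv1,aopv4}, so the argument is formally parallel to the classical $W^{1,p}$ case.
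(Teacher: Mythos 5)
The paper states this theorem without proof: it is listed among the preliminaries as a result quoted from \cite{aopv1,aopv4}, so there is no internal argument to compare yours against. Your proof is the standard Sobolev-space argument --- norm axioms reduced termwise to those of $\Arrowvert\cdot\Arrowvert_{L_{\Delta}^{p}}$, completeness via convergence of $x_{n}$ and $x_{n}^{\Delta}$ in $L_{\Delta}^{p}(J^{0})$ followed by passage to the limit in the weak-derivative identity using H\"older and the boundedness of $\varphi^{\Delta}$ and $\varphi^{\sigma}$, and the inner-product check for $p=2$ --- and it is correct, provided one grants the time-scale facts you explicitly flag and borrow from \cite{aopv1} (completeness of $L_{\Delta}^{p}$, H\"older's inequality, and uniqueness $\Delta$-a.e.\ of the weak $\Delta$-derivative, which makes $x\mapsto x^{\Delta}$ well defined on equivalence classes); this is also how the cited source argues.
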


\begin{propo}\label{cj}
Assume $p\in\bar{\R}$ with $p\ge 1$, then there exist a constant $K>0$, only dependent on $b-a$, such that the inequality
\begin{displaymath}
\Arrowvert x\Arrowvert_{\mathcal{C}(J)}\le K\cdot\Arrowvert x\Arrowvert_{W_{\Delta}^{1,p}}
\end{displaymath}
holds for all $x\in$\W and hence, the immersion \W$\hookrightarrow\mathcal{C}(J)$ is continuous.
\end{propo}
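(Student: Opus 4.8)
The plan is to combine the fundamental theorem of calculus for $\Delta$-Sobolev functions with H\"older's inequality on time scales. First I would recall from \cite{aopv1,aopv4} that every $x\in W_{\Delta}^{1,p}(J)$ has an absolutely continuous representative (which is why the statement $x\in\mathcal{C}(J)$ makes sense) satisfying
\[
x(t)-x(s)=\int_{[s,t)_\T} x^{\Delta}(\tau)\,\Delta\tau\qquad\text{for all }s,t\in J,\ s\le t,
\]
so that $\|x\|_{\mathcal{C}(J)}=\sup_{t\in J}|x(t)|$ and this supremum is attained.

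Next, fix $t,s\in J$ and let $q$ be the conjugate exponent of $p$, i.e. $\tfrac1p+\tfrac1q=1$. From the identity above and H\"older's inequality for the $\Delta$-integral,
\[
|x(t)-x(s)|\le\int_{J^0}|x^{\Delta}(\tau)|\,\Delta\tau\le \mu_\Delta(J^0)^{1/q}\,\|x^{\Delta}\|_{L_{\Delta}^{p}}=(b-a)^{1/q}\,\|x^{\Delta}\|_{L_{\Delta}^{p}},
\]
using that the $\Delta$-measure of $J^0$ equals $b-a$ (the cases $p=1$ and $p=\infty$ are read in the obvious way). On the other hand, a mean-value argument produces a point $s_0\in J$ with $|x(s_0)|\le\frac{1}{b-a}\int_{J^0}|x(\tau)|\,\Delta\tau$: otherwise $|x|>\frac{1}{b-a}\int_{J^0}|x|$ everywhere, and integrating over $J^0$ yields a contradiction. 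Applying H\"older once more gives $|x(s_0)|\le (b-a)^{-1/p}\,\|x\|_{L_{\Delta}^{p}}$.

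Combining the two estimates, for every $t\in J$,
\[
|x(t)|\le |x(s_0)|+|x(t)-x(s_0)|\le (b-a)^{-1/p}\|x\|_{L_{\Delta}^{p}}+(b-a)^{1/q}\|x^{\Delta}\|_{L_{\Delta}^{p}}\le K\,\|x\|_{W_{\Delta}^{1,p}},
\]
with $K:=\max\{(b-a)^{-1/p},(b-a)^{1/q}\}$, which depends only on $b-a$ for the fixed exponent $p$. Taking the supremum over $t\in J$ yields the claimed inequality, and since the bound is linear in $x$, the immersion $W_{\Delta}^{1,p}(J)\hookrightarrow\mathcal{C}(J)$ is continuous.

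The only genuinely delicate ingredients are (i) the existence of the absolutely continuous representative and the validity of the fundamental theorem of calculus in the $\Delta$-setting, which I would simply quote from \cite{aopv1,aopv4}, and (ii) handling the H\"older step uniformly over the full range $p\in[1,\infty]$; neither of these requires new work, so the argument is essentially routine once those tools are in place.
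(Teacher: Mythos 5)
Your argument is correct and is essentially the standard proof of this embedding; the paper itself gives no proof, importing the proposition from \cite{aopv1,aopv4}, where the same combination of the absolutely continuous representative, the $\Delta$-fundamental theorem of calculus, the mean-value point $s_0$, and H\"older's inequality is used. The only cosmetic point is that your constant $K=\max\{(b-a)^{-1/p},(b-a)^{1/q}\}$ still depends on $p$, whereas the statement asks for dependence on $b-a$ alone; this is repaired by enlarging $K$ to $\max\{1,\,b-a,\,(b-a)^{-1}\}$, which dominates both exponents uniformly for all $p\in[1,\infty]$.
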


\begin{definicion}
Let $p\in\R$ be such that $p\ge 1$, define the set $W_{0,\Delta}^{1,p}(J)$ as the closure of the set $\mathcal{C}_{0,rd}^{1}(J^{\kappa})$ in \W. We define $H_{0,\Delta}^{1}(J):=W_{0,\Delta}^{1,2}(J)$.
\end{definicion}

The spaces $W_{0,\Delta}^{1,p}(J)$ and $H_{0,\Delta}^{1}(J)$ are endowed with the norm induced by $\Arrowvert\cdot\Arrowvert_{W_{\Delta}^{1,p}}$\-, defined in (\ref{norm}), and the inner product induced by $(\cdot,\cdot)_{H_{\Delta}^{1}}$, defined in (\ref{pi}).These spaces satisfy the following properties:
\begin{propo}{(Poincare's inequality)}
Let $p\in\R$ be such that $p\ge 1$. Then, there exists a constant $L>0$, only dependent on $b-a$, such that
\begin{displaymath}
\Arrowvert u\Arrowvert_{W_{\Delta}^{1,p}}\le L\cdot\Arrowvert u^{\Delta}\Arrowvert_{ L_{\Delta}^{p}}\qquad\forall u\in W_{0,\Delta}^{1,p}(J).
\end{displaymath}
\end{propo}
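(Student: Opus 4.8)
The plan is to derive Poincaré's inequality from the continuous embedding \W$\hookrightarrow\mathcal{C}(J)$ established in Proposition~\ref{cj}, together with the vanishing boundary condition that characterizes $W_{0,\Delta}^{1,p}(J)$. First I would treat the dense subset $\mathcal{C}_{0,rd}^{1}(J^{\kappa})$: for $u$ in this set, since $u(a)=0$, the fundamental theorem of calculus on time scales gives $u(t)=\int_{[a,t)_{\T}}u^{\Delta}(s)\,\Delta s$ for every $t\in J$, and hence by H\"older's inequality on time scales $|u(t)|\le (b-a)^{1/q}\,\Arrowvert u^{\Delta}\Arrowvert_{L_{\Delta}^{p}}$, where $q$ is the conjugate exponent (with the obvious modification when $p=1$ or $p=\infty$). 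Taking the supremum over $t$ yields $\Arrowvert u\Arrowvert_{\mathcal{C}(J)}\le (b-a)^{1/q}\,\Arrowvert u^{\Delta}\Arrowvert_{L_{\Delta}^{p}}$.

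The second step converts this sup-norm bound into the $W_{\Delta}^{1,p}$-norm bound. From the $L_\Delta^p$-estimate $\Arrowvert u\Arrowvert_{L_{\Delta}^{p}}\le (b-a)^{1/p}\Arrowvert u\Arrowvert_{\mathcal{C}(J)}$ combined with the previous inequality, and adding $\Arrowvert u^{\Delta}\Arrowvert_{L_{\Delta}^{p}}$ to both sides, one obtains $\Arrowvert u\Arrowvert_{W_{\Delta}^{1,p}}=\Arrowvert u\Arrowvert_{L_{\Delta}^{p}}+\Arrowvert u^{\Delta}\Arrowvert_{L_{\Delta}^{p}}\le L\,\Arrowvert u^{\Delta}\Arrowvert_{L_{\Delta}^{p}}$ with $L:=1+(b-a)^{1/p}(b-a)^{1/q}=1+(b-a)$, a constant depending only on $b-a$.

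The final step is a density/closure argument: an arbitrary $u\in W_{0,\Delta}^{1,p}(J)$ is by definition the $W_{\Delta}^{1,p}$-limit of a sequence $(u_n)\subset\mathcal{C}_{0,rd}^{1}(J^{\kappa})$. Since convergence in $W_{\Delta}^{1,p}$ forces $\Arrowvert u_n\Arrowvert_{W_{\Delta}^{1,p}}\to\Arrowvert u\Arrowvert_{W_{\Delta}^{1,p}}$ and $\Arrowvert u_n^{\Delta}\Arrowvert_{L_{\Delta}^{p}}\to\Arrowvert u^{\Delta}\Arrowvert_{L_{\Delta}^{p}}$ (the $\Delta$-derivative being continuous from $W_{\Delta}^{1,p}$ into $L_{\Delta}^{p}$ by the definition of the norm), passing to the limit in $\Arrowvert u_n\Arrowvert_{W_{\Delta}^{1,p}}\le L\Arrowvert u_n^{\Delta}\Arrowvert_{L_{\Delta}^{p}}$ gives the claim for $u$.

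I expect the only delicate point to be the rigorous justification of the representation $u(t)=\int_{[a,t)_{\T}}u^{\Delta}(s)\,\Delta s$ and of H\"older's inequality in the time-scale setting — in particular handling the endpoint cases $p=1$ and $p=+\infty$ and the measurability/integrability bookkeeping on $J^{0}$ versus $J^{\kappa}$. These are, however, standard facts from the references \cite{aopv1,aopv4,bp}, so the argument is essentially routine once they are invoked.
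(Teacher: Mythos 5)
Your argument is correct. The paper itself gives no proof of this proposition (it is quoted from \cite{aopv1,aopv4} in the preliminaries), and your derivation --- the representation $u(t)=\int_{[a,t)_{\T}}u^{\Delta}(s)\,\Delta s$ from $u(a)=0$, H\"older's inequality on time scales to get the sup-norm bound, the elementary estimate $\Arrowvert u\Arrowvert_{L_{\Delta}^{p}}\le (b-a)^{1/p}\Arrowvert u\Arrowvert_{\mathcal{C}(J)}$ yielding $L=1+(b-a)$, and the closure/density step to pass from $\mathcal{C}_{0,rd}^{1}(J^{\kappa})$ to all of $W_{0,\Delta}^{1,p}(J)$ --- is exactly the standard proof given in those references; note that since the proposition restricts to $p\in\R$, the endpoint $p=+\infty$ you flag as delicate does not actually arise, and $p=1$ is handled by the plain triangle inequality for the $\Delta$-integral.
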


\begin{propo}{[Corollary $3.3$ in \cite{aopv4}]}
\label{wirt}
If $u\in H_{0,\Delta}^{1}(J)$, then 
\begin{displaymath}
	\int_{a}^{b}(u^{\sigma})^{2}(t)\Delta t\le\frac{1}{\lambda_{1}}\int_{a}^{b}(u^{\Delta})^{2}(t)\Delta t
\end{displaymath}
holds, where $\lambda_{1}$ is the smallest positive eigenvalue of problem $-u^{\Delta\Delta}(t)=\lambda u^{\sigma}(t);\quad t\in J^{\kappa^{2}}$ and $u(a)=0=u(b)$. 
\end{propo}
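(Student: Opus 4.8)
The plan is to obtain the inequality as an immediate consequence of the Rayleigh--quotient characterisation of the first eigenvalue. Concretely, I would show that
$$
\lambda_{1}=\inf\left\{\frac{\int_{a}^{b}(u^{\Delta})^{2}(t)\,\Delta t}{\int_{a}^{b}(u^{\sigma})^{2}(t)\,\Delta t}\ :\ u\in H_{0,\Delta}^{1}(J),\ u\neq 0\right\}
$$
and that this infimum is attained. Granting this, for every nonzero $u\in H_{0,\Delta}^{1}(J)$ the quotient is at least $\lambda_{1}$, which is exactly $\int_{a}^{b}(u^{\sigma})^{2}\,\Delta t\le\lambda_{1}^{-1}\int_{a}^{b}(u^{\Delta})^{2}\,\Delta t$; the case $u=0$ is trivial.

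To prove the characterisation I would run the direct method of the calculus of variations. Write $R(u)$ for the quotient above and let $m\ge 0$ be its infimum over the set $S=\{u\in H_{0,\Delta}^{1}(J):\ \|u^{\sigma}\|_{L_{\Delta}^{2}}=1\}$. Pick a minimising sequence $(u_{n})\subset S$ with $\|u_{n}^{\Delta}\|_{L_{\Delta}^{2}}^{2}\to m$. By Poincar\'e's inequality $(u_{n})$ is bounded in $H_{0,\Delta}^{1}(J)$, so after passing to a subsequence $u_{n}\rightharpoonup u_{\ast}$ weakly in $H_{0,\Delta}^{1}(J)$, and weak lower semicontinuity of $u\mapsto\|u^{\Delta}\|_{L_{\Delta}^{2}}^{2}$ gives $\|u_{\ast}^{\Delta}\|_{L_{\Delta}^{2}}^{2}\le m$. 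The crucial point (see below) is that $u_{n}^{\sigma}\to u_{\ast}^{\sigma}$ strongly in $L_{\Delta}^{2}(J^{0})$, so that $\|u_{\ast}^{\sigma}\|_{L_{\Delta}^{2}}=1$, i.e.\ $u_{\ast}\in S$; then $m\le R(u_{\ast})=\|u_{\ast}^{\Delta}\|_{L_{\Delta}^{2}}^{2}\le m$, so $u_{\ast}$ is a minimiser. Setting the first variation $\frac{d}{d\varepsilon}R(u_{\ast}+\varepsilon\varphi)$ equal to zero at $\varepsilon=0$ for all $\varphi\in H_{0,\Delta}^{1}(J)$ yields $\int_{a}^{b}u_{\ast}^{\Delta}\varphi^{\Delta}\,\Delta t=m\int_{a}^{b}u_{\ast}^{\sigma}\varphi^{\sigma}\,\Delta t$, which, by the integration-by-parts identity built into the definition of $W_{\Delta}^{1,p}$, is precisely the weak form of $-u^{\Delta\Delta}=m\,u^{\sigma}$ with $u(a)=0=u(b)$; hence $m$ is an eigenvalue. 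Conversely, testing the weak equation of any eigenpair $(\lambda,v)$ against $v$ gives $\lambda=R(v)\ge m$, so $m$ is the smallest one, $m=\lambda_{1}$. Finally $\lambda_{1}>0$: if $m=0$ then $u_{\ast}^{\Delta}=0$ $\Delta$-a.e., so $u_{\ast}(t)=u_{\ast}(a)+\int_{a}^{t}u_{\ast}^{\Delta}=0$ for all $t$, contradicting $u_{\ast}\in S$.

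The one step that requires genuine work in the $\Delta$-calculus --- and the point I expect to be the main obstacle --- is the strong convergence $u_{n}^{\sigma}\to u_{\ast}^{\sigma}$ in $L_{\Delta}^{2}(J^{0})$. I would deduce it from a compact embedding $H_{0,\Delta}^{1}(J)\hookrightarrow\mathcal{C}(J)$. Indeed, for $u$ in a bounded subset of $H_{0,\Delta}^{1}(J)$, applying H\"older's inequality to $u(t)-u(s)=\int_{s}^{t}u^{\Delta}(\tau)\,\Delta\tau$ gives $|u(t)-u(s)|\le\|u^{\Delta}\|_{L_{\Delta}^{2}}\,|t-s|^{1/2}$, so by Proposition \ref{cj} the family is bounded and equicontinuous on $J$; an Arzel\`a--Ascoli argument on the time scale then extracts a uniformly convergent subsequence, and since $\sigma$ maps $J$ into $J$ the shifted functions $u_{n}^{\sigma}$ converge uniformly as well, hence in $L_{\Delta}^{2}(J^{0})$ because $J^{0}$ has finite $\Delta$-measure; uniqueness of limits identifies the limit with the weak $H_{0,\Delta}^{1}$-limit $u_{\ast}$. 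With this compactness in hand, the remainder is the classical Rayleigh--quotient argument transcribed to the Hilbert space $H_{0,\Delta}^{1}(J)$.
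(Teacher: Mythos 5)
The paper does not actually prove this proposition: it is imported verbatim as Corollary 3.3 of \cite{aopv4}, so there is no internal argument to compare yours against. Taken on its own terms, your proof is correct and follows the standard route to Wirtinger-type inequalities: characterise $\lambda_{1}$ as the infimum of the Rayleigh quotient, produce a minimiser by the direct method, and read the inequality off the definition of the infimum. The three places where the time-scale setting could have caused trouble are all handled soundly: the H\"older estimate $|u(t)-u(s)|\le \|u^{\Delta}\|_{L^{2}_{\Delta}}|t-s|^{1/2}$ is legitimate because $\int_{s}^{t}1\,\Delta\tau=t-s$, so Arzel\`a--Ascoli on the compact set $J$ does give the compact embedding into $\mathcal{C}(J)$; uniform convergence of $u_{n}$ passes to $u_{n}^{\sigma}$ on $J^{0}$ and hence to $L^{2}_{\Delta}$ convergence since the $\Delta$-measure of $J^{0}$ is $b-a<\infty$; and the weak Euler--Lagrange identity $\int u_{\ast}^{\Delta}\varphi^{\Delta}=m\int u_{\ast}^{\sigma}\varphi^{\sigma}$ is, by the defining identity of $W_{\Delta}^{1,2}$, precisely the statement that $u_{\ast}^{\Delta}$ has weak $\Delta$-derivative $-m\,u_{\ast}^{\sigma}$, so $m$ is an eigenvalue of the stated problem and not merely of a weak relaxation. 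Two small points are worth making explicit if you write this up: the uniform limit of the extracted subsequence coincides with the weak limit $u_{\ast}$ because point evaluations are continuous linear functionals on $H_{0,\Delta}^{1}(J)$ by Proposition \ref{cj}; and the constraint set $S$ is nonempty whenever $a<b$, which must be checked before one speaks of a minimising sequence. What your argument buys over the bare citation is self-containedness and, as a by-product, attainment of the best constant; what it costs is the compactness machinery, which is exactly the content the authors chose to outsource to \cite{aopv4}.
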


In the Sobolev space $H_{0,\Delta}^{1}(J)$ with $a=0$ and $b=T$, consider the inner product 
\begin{displaymath}
(u,v)=\int_{0}^{T}u^{\Delta}(t)v^{\Delta}(t)\Delta t
\end{displaymath}
inducing the norm $\Arrowvert \cdot\Arrowvert$.

It is consequence of Poincare's inequality that:
\begin{equation}
\label{des1}
\Arrowvert u\Arrowvert_{H_{0,\Delta}^{1}(J)}\le\Arrowvert u\Arrowvert_{W_{\Delta}^{1,2}(J)}\le c\Arrowvert u^{\Delta}\Arrowvert_{ L_{\Delta}^{2}(J^{0})}\equiv c\Arrowvert u\Arrowvert,
\end{equation} 
and 
\begin{equation}
\label{des2}
\Arrowvert u\Arrowvert \le \Arrowvert u\Arrowvert_{W_{\Delta}^{1,2}(J)}\le 2\Arrowvert u\Arrowvert_{H_{0,\Delta}^{1}(J)}.
\end{equation}

\section{Variational formulation of $(P)$ and existence results}

Firstly, to show the variational structure underlying an impulsive dynamic equation, we consider the lineal problem
$$
(LP)\left\{\begin{array}{lcl}
-u^{\Delta \Delta}(t)+\lambda u^{\sigma}(t)=h(t),& &\textrm{$\Delta$-a.e.}\quad t\in J^{\kappa^{2}},\\
\noalign{\bigskip} u^{\Delta}(t_{j}^{+})-u^{\Delta}(t_{j}^{-})=d_{j}, &&  j=1,2,\ldots,p,\\
\noalign{\bigskip} u(0)= u(T)=0.
\end{array}
\right.
$$
where we consider $J$ with $a=0$ and $b=T$ and $d_{j}$, $j=1,\cdots,p$, are fixed constants.

Suppose that $u\in\mathcal{C}_{rd}(J)$ is such that $u(0)=0=u(T)$. Moreover assume that for every $j=0,1,\cdots,p$, $u_{j}:=u\mid_{(t_{j},t_{j+1})}$ is such that $u_{j}\in H_{\Delta}^{2}(t_{j},t_{j+1})$.
\begin{definicion}
We say that $u$ is a \emph{classical solution} of $(LP)$ if the limits $u^{\Delta}(t_{j}^{+})$ and $u^{\Delta}(t_{j}^{-})$ exist for every $j=1,2,\ldots,p$ and it satisfies the equation on $(LP)$ for $\Delta$-almost everywhere ($\Delta$-a.e.) $t\in J^{\kappa^{2}}$.
\end{definicion}

	Take $v\in H_{0,\Delta}^{1}(J)$, multiply the equation by $v^{\sigma}$ and integrate between $0$ and $T$:
	\begin{displaymath}
	-\int_{0}^{T}u^{\Delta\Delta}v^{\sigma}+\lambda\int_{0}^{T}u^{\sigma}v^{\sigma}=\int_{0}^{T}hv^{\sigma}.
	\end{displaymath}
	
	Taking account that $v(0)=0=v(T)$, integrating by parts:
	\begin{displaymath}
	-\int_{0}^{T}u^{\Delta\Delta}v^{\sigma}=-\sum_{j=0}^{p}\int_{t_{j}}^{t_{j+1}}u^{\Delta\Delta}v^{\sigma}=\sum_{j=0}^{p}\left\lbrack -(u^{\Delta}v)\mid_{t_{j}^{+}}^{t_{j+1}^{-}}+\int_{t_{j}}^{t_{j+1}}u^{\Delta}v^{\Delta}\right\rbrack =
\end{displaymath}
	\begin{displaymath}
	=\sum_{j=1}^{p}d_{j}v(t_{j})+\int_{0}^{T}u^{\Delta}v^{\Delta}.
\end{displaymath}
	Hence,
	\begin{displaymath}
	\int_{0}^{T}u^{\Delta}v^{\Delta}+\lambda\int_{0}^{T}u^{\sigma}v^{\sigma}=\int_{0}^{T}hv^{\sigma}-\sum_{j=1}^{p}d_{j}v(t_{j}).
\end{displaymath}	
	We define the bilinear form $a:H_{0,\Delta}^{1}(J)\times H_{0,\Delta}^{1}(J)\longrightarrow\mathbb{R}$ by
	\begin{equation}
	\label{a}
	a(u,v)=\int_{0}^{T}u^{\Delta}v^{\Delta}+\lambda\int_{0}^{T}u^{\sigma}v^{\sigma},
	\end{equation}
	and the linear operator $l:H_{0,\Delta}^{1}(J)\longrightarrow\mathbb{R}$ by
	\begin{equation}
	\label{l}
	 l(v)=\int_{0}^{T}hv^{\sigma}-\sum_{j=1}^{p}d_{j}v(t_{j}).
	 \end{equation}

	 Thus, the concept of \emph{weak solution} for the impulsive problem $(LP)$ is a function $u\in H_{0,\Delta}^{1}(J)$ such that $a(u,v)=l(v)$ is valid for any $v\in H_{0,\Delta}^{1}(J)$.
	 
   We can prove that $a$ defined by (\ref{a}) and $l$ defined by (\ref{l}) are continuous, and, from Proposition $\ref{wirt}$, that $a$ is coercive if $\lambda>-\lambda_{1}$.
	 
	 Consider $\varphi:H_{0,\Delta}^{1}(J)\to\R$ defined by
	 \begin{equation}
	 \label{varphi} \varphi(v)=\frac{1}{2}a(v,v)-l(v)=\frac{1}{2}\int_{0}^{T}(v^{\Delta})^{2}+\frac{\lambda}{2}\int_{0}^{T}(v^{\sigma})^{2}-\int_{0}^{T}hv^{\sigma}+\sum_{j=1}^p d_{j}v(t_{j}).
	 \end{equation}

We can deduce the following regularity properties which allow us to assert that the solutions to $(LP)$ are precisely the critical points of $\varphi$.

\begin{lema} The following statements are valid:
\begin{enumerate}
\item $\varphi$ is differentiable at any $u\in H_{0,\Delta}^{1}(J)$ and 
	 \begin{displaymath} (\varphi'(u),v)=\int_{0}^{T}u^{\Delta}v^{\Delta}+\lambda\int_{0}^{T}u^{\sigma}v^{\sigma}-\int_{0}^{T}hv^{\sigma}+\sum_{j=1}^{p}d_{j}v(t_{j})=a(u,v)-l(v).
	 \end{displaymath}
\item If $u\in H_{0,\Delta}^{1}(J)$ is a critical point of $\varphi$ defined by (\ref{varphi}), then $u$ is a weak solution of the impulsive problem $(LP)$.
\end{enumerate}
\end{lema}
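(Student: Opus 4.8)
The plan is to prove the two statements in sequence, since the second relies on the computation underlying the first. For statement (1), I would start from the explicit formula for $\varphi$ in (\ref{varphi}) and verify Gâteaux differentiability directly: fix $u,v\in H_{0,\Delta}^1(J)$, form the difference quotient $\frac{1}{s}(\varphi(u+sv)-\varphi(u))$, expand using bilinearity of the integrals and linearity of the point-evaluation terms, and let $s\to 0$. The quadratic terms $\frac12\int_0^T(v^\Delta)^2$ and $\frac{\lambda}{2}\int_0^T(v^\sigma)^2$ contribute the cross terms $\int_0^T u^\Delta v^\Delta$ and $\lambda\int_0^T u^\sigma v^\sigma$ respectively, while the linear terms $-\int_0^T h v^\sigma$ and $\sum_j d_j v(t_j)$ are unchanged. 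This gives exactly the claimed expression $(\varphi'(u),v)=a(u,v)-l(v)$. To upgrade Gâteaux to Fréchet differentiability (and continuity of $\varphi'$), I would note that $u\mapsto a(u,\cdot)$ is linear and bounded on $H_{0,\Delta}^1(J)$ — here I invoke the already-stated continuity of the bilinear form $a$ and of the functional $l$, together with Proposition \ref{cj} to control the point-evaluation terms $v(t_j)$ by $\|v\|_{\mathcal C(J)}\le K\|v\|_{W_\Delta^{1,2}}$ and hence by $\|v\|$ via (\ref{des1})–(\ref{des2}); boundedness of the derivative map in operator norm then yields the Fréchet property.

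For statement (2), suppose $u\in H_{0,\Delta}^1(J)$ satisfies $\varphi'(u)=0$. By part (1) this means $a(u,v)-l(v)=0$, i.e.
\begin{displaymath}
\int_0^T u^\Delta v^\Delta + \lambda\int_0^T u^\sigma v^\sigma = \int_0^T h v^\sigma - \sum_{j=1}^p d_j v(t_j)
\end{displaymath}
for every $v\in H_{0,\Delta}^1(J)$. But this is precisely the identity $a(u,v)=l(v)$ that was defined above to be the notion of weak solution of $(LP)$. Hence $u$ is a weak solution, which is all that is asked. (If one wants to connect further to the classical formulation, one would run the integration-by-parts computation at the start of the section in reverse: test against $v$ supported in a single gap $(t_j,t_{j+1})$ to recover $-u^{\Delta\Delta}+\lambda u^\sigma = h$ $\Delta$-a.e. there, then vary $v$ freely to pick up the jump conditions $u^\Delta(t_j^+)-u^\Delta(t_j^-)=d_j$; but the lemma as stated only requires arriving at the weak formulation.)

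The only genuinely delicate point is the differentiability in part (1): one must make sure the point-evaluation functionals $v\mapsto v(t_j)$ are continuous on $H_{0,\Delta}^1(J)$ so that $l$ is a bounded linear functional and $\varphi$ is well-defined and smooth — this is exactly where the continuous embedding $W_\Delta^{1,2}(J)\hookrightarrow\mathcal C(J)$ of Proposition \ref{cj} is essential, and it is the reason the impulsive terms cause no trouble in the variational setting. Everything else is a routine expansion of a quadratic functional, so I expect no further obstacle.

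\qed
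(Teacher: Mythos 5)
The paper states this lemma without proof, presenting it as a routine consequence of the preceding construction of $a$, $l$ and $\varphi$, and your argument is exactly the standard one that fills in those details: the expansion $\varphi(u+sv)-\varphi(u)=s\bigl(a(u,v)-l(v)\bigr)+\tfrac{s^{2}}{2}a(v,v)$ gives part (1), with Proposition \ref{cj} guaranteeing that the point evaluations $v\mapsto v(t_{j})$ (and hence $l$) are bounded, and part (2) is immediate since $\varphi'(u)=0$ is literally the identity $a(u,v)=l(v)$ defining a weak solution. Your proposal is correct and consistent with the paper's intended (implicit) reasoning.
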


We will use the following result in linear functional analysis which ensures the existence of a critical point of $\varphi$.

\begin{teor}{(Lax-Milgram Theorem)}
Let $H$ be a Hilbert space and $a:H\times H\to\R$ a bounded bilinear form. If $a$ is coercive, i.e., there exists $\alpha>0$ such that $a(u,u)\ge\alpha\Arrowvert u\Arrowvert^{2}$ for every $u\in H$, then for any $\sigma\in H'$ (the conjugate space of H) there exists a unique $u\in H$ such that
\begin{displaymath}
a(u,v)=\langle\varphi, v\rangle\quad v\in H.
\end{displaymath}
Moreover, if $a$ is also symmetric, then the functional $J:H\to\R$ defined by
\begin{displaymath}
J(v)=\frac{1}{2}a(u,v)-\langle\varphi, v\rangle,
\end{displaymath}
attains its minimum at $u$.
\end{teor}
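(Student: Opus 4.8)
The plan is to reduce the existence-and-uniqueness claim to the Riesz representation theorem via an auxiliary operator, and then to handle the minimization claim by a completion-of-the-square argument. First I would fix $u\in H$ and observe that $v\mapsto a(u,v)$ is a bounded linear functional on $H$; by Riesz representation there is a unique element, call it $Au$, with $a(u,v)=(Au,v)$ for all $v\in H$. Linearity of $a$ in its first argument makes $A:H\to H$ linear, and boundedness of $a$, say $|a(u,v)|\le M\Arrowvert u\Arrowvert\,\Arrowvert v\Arrowvert$, gives $\Arrowvert Au\Arrowvert\le M\Arrowvert u\Arrowvert$, so $A$ is continuous.

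The key step is to show that $A$ is a bijection with continuous inverse. Coercivity yields $\alpha\Arrowvert u\Arrowvert^{2}\le a(u,u)=(Au,u)\le\Arrowvert Au\Arrowvert\,\Arrowvert u\Arrowvert$, hence $\Arrowvert Au\Arrowvert\ge\alpha\Arrowvert u\Arrowvert$; this lower bound forces $A$ to be injective and to have closed range. To see that $\mathrm{range}(A)$ is all of $H$, I would take $w$ orthogonal to $\mathrm{range}(A)$; then $0=(Aw,w)=a(w,w)\ge\alpha\Arrowvert w\Arrowvert^{2}$, so $w=0$, and a closed dense subspace is the whole space. Thus $A^{-1}$ exists on $H$ with $\Arrowvert A^{-1}\Arrowvert\le 1/\alpha$. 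Now, given $\varphi\in H'$, Riesz representation provides $f\in H$ with $\langle\varphi,v\rangle=(f,v)$ for every $v$; setting $u:=A^{-1}f$ gives $a(u,v)=(Au,v)=(f,v)=\langle\varphi,v\rangle$ for all $v\in H$, and uniqueness is immediate from injectivity of $A$ (or directly: if $a(u_{1}-u_{2},v)=0$ for all $v$, take $v=u_{1}-u_{2}$ and use coercivity).

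For the final assertion, assume in addition that $a$ is symmetric. Then $a(\cdot,\cdot)$ is itself an inner product on $H$, positivity coming from coercivity, so for the solution $u$ just constructed and any $v\in H$ I would expand
\[
J(v)-J(u)=\frac{1}{2}a(v,v)-\frac{1}{2}a(u,u)-\langle\varphi,v-u\rangle=\frac{1}{2}a(v,v)-\frac{1}{2}a(u,u)-a(u,v-u)=\frac{1}{2}a(v-u,v-u),
\]
using $a(u,v-u)=\langle\varphi,v-u\rangle$. Since $a(v-u,v-u)\ge\alpha\Arrowvert v-u\Arrowvert^{2}\ge 0$, we obtain $J(v)\ge J(u)$ for all $v$, with equality only at $v=u$; hence $J$ attains its minimum precisely at $u$.

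I expect the only genuine obstacle to be the surjectivity of $A$: the estimate $\Arrowvert Au\Arrowvert\ge\alpha\Arrowvert u\Arrowvert$ delivers injectivity and closedness of the range for free, but one still needs the orthogonality argument (equivalently, the closed range theorem) to upgrade this to surjectivity. Everything else is a direct consequence of Riesz representation and the identity above. For completeness I would note two alternative routes: existence can also be obtained by applying the Banach fixed point theorem to $v\mapsto v-\rho(Av-f)$ for small $\rho>0$, which gives the contraction constant explicitly; and the minimization claim can alternatively be proved by the direct method, since $J$ is convex, coercive and weakly lower semicontinuous on $H$.
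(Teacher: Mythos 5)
Your proof is correct, but note that the paper itself gives no proof of this statement: the Lax--Milgram theorem is quoted there as a classical result from linear functional analysis and is only used as a tool, so there is nothing to compare against. Your argument is the standard one — Riesz representation to define the operator $A$ with $a(u,v)=(Au,v)$, the coercivity estimate $\Arrowvert Au\Arrowvert\ge\alpha\Arrowvert u\Arrowvert$ to get injectivity and closed range, the orthogonality argument for surjectivity, and completion of the square for the symmetric case — and each step checks out. One small point worth making explicit: the statement as printed contains typos (the test functional is called $\sigma$ but then written $\varphi$, and the energy is written $J(v)=\frac{1}{2}a(u,v)-\langle\varphi,v\rangle$ where it should read $\frac{1}{2}a(v,v)-\langle\varphi,v\rangle$); your expansion $J(v)-J(u)=\frac{1}{2}a(v-u,v-u)$ silently and correctly works with the intended functional, which is the right thing to do, but you should flag that you are proving the corrected statement. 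Your closing remarks on the alternative routes (Banach fixed point for the nonsymmetric existence, direct method for the minimization) are accurate.
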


By the Lax-Milgram theorem  we obtain the following result

\begin{teor}
If $\lambda>-\lambda_{1}$ then the problem $(LP)$ has a weak solution $u\in H_{0,\Delta}^{1}(J)$ for any $h\in L_{\Delta}^{2}(J^{0})$. Moreover, $u\in H_{\Delta}^{2}(J)$ and $u$ is a classical solution and $u$ minimizes the functional (\ref{varphi}) and hence it is a critical point of $\varphi$.
\end{teor}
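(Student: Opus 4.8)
The plan is to split the statement into three stages: (i) existence and uniqueness of a weak solution; (ii) the regularity claim $u\in H^2_\Delta(J)$ and the fact that $u$ is a classical solution; (iii) the minimization/critical point characterization. For stage (i), I would verify the hypotheses of the Lax--Milgram theorem for the bilinear form $a$ defined in (\ref{a}) on $H=H^1_{0,\Delta}(J)$. Boundedness of $a$ follows from the Cauchy--Schwarz inequality in $L^2_\Delta(J^0)$ together with the continuous immersion $H^1_{0,\Delta}(J)\hookrightarrow\mathcal{C}(J)$ of Proposition~\ref{cj} (to control the $u^\sigma$ term in terms of the $H^1_{0,\Delta}$-norm, hence in terms of $\|u\|$ via (\ref{des1})). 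For coercivity, when $\lambda\ge 0$ one has $a(u,u)\ge\|u\|^2$ immediately; when $-\lambda_1<\lambda<0$ one uses Proposition~\ref{wirt} (Wirtinger-type inequality) to get
\begin{displaymath}
a(u,u)=\int_0^T(u^\Delta)^2+\lambda\int_0^T(u^\sigma)^2\ge\Bigl(1+\frac{\lambda}{\lambda_1}\Bigr)\int_0^T(u^\Delta)^2=\Bigl(1+\frac{\lambda}{\lambda_1}\Bigr)\|u\|^2,
\end{displaymath}
and $1+\lambda/\lambda_1>0$ precisely because $\lambda>-\lambda_1$. One also checks $l\in H'$: this is again Cauchy--Schwarz plus Proposition~\ref{cj} for the point evaluations $v\mapsto v(t_j)$. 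Lax--Milgram then yields a unique $u\in H^1_{0,\Delta}(J)$ with $a(u,v)=l(v)$ for all $v$, i.e.\ a weak solution; and since $a$ is symmetric, the same theorem gives that $u$ minimizes $v\mapsto\frac12 a(v,v)-l(v)=\varphi(v)$, and by the Lemma $\varphi'(u)=0$, so $u$ is a critical point.

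For stage (ii), I would run the standard bootstrap/integration-by-parts argument on each subinterval. Fix $j\in\{0,\dots,p\}$ and test the weak formulation with $v\in\mathcal{C}^1_{0,rd}$ supported in $(t_j,t_{j+1})$: then $\int_{t_j}^{t_{j+1}}u^\Delta v^\Delta=\int_{t_j}^{t_{j+1}}(h-\lambda u^\sigma)v^\sigma$ for all such $v$. Since $h-\lambda u^\sigma\in L^2_\Delta$, this says $u^\Delta$ has a $\Delta$-weak derivative on $(t_j,t_{j+1})$ equal to $\lambda u^\sigma-h\in L^2_\Delta$; hence $u_j\in H^2_\Delta(t_j,t_{j+1})$ and $-u^{\Delta\Delta}+\lambda u^\sigma=h$ holds $\Delta$-a.e.\ there. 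In particular the one-sided limits $u^\Delta(t_j^\pm)$ exist. Finally, plugging this back into the full weak formulation and integrating by parts on each piece — exactly reversing the computation done before (\ref{a}) in the text — the interior terms cancel and one is left with $\sum_{j=1}^p\bigl(u^\Delta(t_j^+)-u^\Delta(t_j^-)-d_j\bigr)v(t_j)=0$ for all $v\in H^1_{0,\Delta}(J)$; choosing $v$ with prescribed values at the $t_j$ forces the jump conditions $u^\Delta(t_j^+)-u^\Delta(t_j^-)=d_j$. Together with $u(0)=u(T)=0$ (which holds because $u\in H^1_{0,\Delta}(J)$) this shows $u$ is a classical solution.

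The main obstacle is the regularity step (ii), and specifically making the "integrate by parts on each subinterval, collect boundary terms" argument rigorous in the time-scale setting: one must be careful that the $t_j$ are right-dense (so that $\sigma(t_j)=t_j$ and the point evaluations behave as expected), that the integration-by-parts formula for $H^2_\Delta$ functions on $(t_j,t_{j+1})$ is available, and that $\mathcal{C}^1_{0,rd}$ functions supported in a single subinterval are dense enough in the relevant space to conclude the pointwise equation — these are the places where one genuinely uses the structural results from \cite{aopv1,aopv4,cre} rather than classical calculus. The existence step (i) and the minimization step (iii) are essentially bookkeeping once Lax--Milgram is set up, the only mild subtlety being the case $-\lambda_1<\lambda<0$ where coercivity is not obvious and must go through Proposition~\ref{wirt}. \qed
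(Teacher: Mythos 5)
Your proposal is correct and follows the same route as the paper: the paper simply invokes the Lax--Milgram theorem for the form $a$ of (\ref{a}) and the functional $l$ of (\ref{l}), having noted beforehand that $a$ is continuous and, via Proposition \ref{wirt}, coercive when $\lambda>-\lambda_{1}$, exactly as in your stages (i) and (iii). The paper gives no argument at all for the regularity claim $u\in H_{\Delta}^{2}(J)$ and the recovery of the jump conditions, so your stage (ii) is a welcome addition rather than a divergence.
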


\subsection{Impulsive nonlinear problem}

We consider the nonlinear Dirichlet problem 
$$
(P)\left\{\begin{array}{lcl}
-u^{\Delta \Delta}(t)+\lambda u^{\sigma}(t)=f(t,u^{\sigma}(t));& & \textrm{$\Delta$-a.e.}\quad t\in J^{\kappa^{2}},\\
\noalign{\bigskip} u^{\Delta}(t_{j}^{+})-u^{\Delta}(t_{j}^{-})=I_{j}(u(t_{j}^{-})),& & j=1,2,\cdots,p,\\
\noalign{\bigskip} u(0)=0=u(T).
\end{array}
\right .
$$
 We assume that $\lambda>-\lambda_{1}$.

	A weak solution of $(P)$ is a function $u\in H_{0,\Delta}^{1}(J)$ such that
	\begin{displaymath}	\int_{0}^{T}u^{\Delta}v^{\Delta}+\lambda\int_{0}^{T}u^{\sigma}v^{\sigma}=-\sum_{j=1}^{p}I_{j}(u(t_{j}^{-}))v(t_{j})+\int_{0}^{T}f(t,u^{\sigma}(t))v^{\sigma}(t)\Delta t,
	\end{displaymath}
	for every $v\in H_{0,\Delta}^{1}(J)$.
	
	We now consider the functional
	\begin{equation}
	\label{phi}	\begin{array}{lcl} 
	\displaystyle  \varphi(u)=\frac{1}{2}a(u,u)-l(u)&=&\displaystyle  \frac{1}{2}\int_{0}^{T}(u^{\Delta})^{2}+\frac{\lambda}{2}\int_{0}^{T}(u^{\sigma})^{2}+\sum_{j=1}^{p}\int_{0}^{u(t_{j})}I_{j}(t)dt\\
	\\
	& &\displaystyle  -\int_{0}^{T}F(t,u^{\sigma}(t))\Delta t,
	\end{array}
	\end{equation}
	
	where $F(t,u):=\int_{0}^{u}f(t,x)dx$.
	
	One can deduce, from the properties of $H$, $f$ and $I_j$, the following regularity properties of $\varphi$. 
	\begin{propo}
	The functional $\varphi$ defined by $(\ref{phi})$ is continuous, differentiable, and weakly lower semi-continuous. Moreover, the critical points of $\varphi$ are weak solutions of $(P)$.
	\end{propo}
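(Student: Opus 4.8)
The plan is to verify the four asserted properties of $\varphi$ in turn, treating the functional as a sum of three parts: the quadratic form $u\mapsto\frac12 a(u,u)$, the impulsive term $u\mapsto\sum_{j=1}^p\int_0^{u(t_j)}I_j(t)\,dt$, and the nonlinear term $u\mapsto-\int_0^T F(t,u^\sigma(t))\,\Delta t$. The key tool throughout is Proposition \ref{cj}, which gives the continuous embedding $H_{0,\Delta}^1(J)\hookrightarrow\mathcal{C}(J)$; this controls both the point evaluations $u\mapsto u(t_j)$ and the sup-norm bounds needed to handle the composition with the continuous functions $f$ and $I_j$. I would also record at the outset that, by (\ref{des1})--(\ref{des2}), the norms $\|\cdot\|$, $\|\cdot\|_{H^1_{0,\Delta}}$ and $\|\cdot\|_{W^{1,2}_\Delta}$ are all equivalent on $H^1_{0,\Delta}(J)$, so it does not matter which one I use.

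First, for continuity and differentiability: the quadratic part is a bounded bilinear form (already noted in the excerpt), hence smooth, with derivative $v\mapsto a(u,v)$. For the impulsive part, since each $I_j$ is continuous, $t\mapsto\int_0^t I_j(s)\,ds$ is $C^1$; composing with the bounded linear (hence smooth) evaluation functional $u\mapsto u(t_j)$ and using the chain rule gives a $C^1$ term with derivative $v\mapsto\sum_j I_j(u(t_j))v(t_j)$. For the nonlinear part, I would use the standard Nemytskii-operator argument: $F(t,\cdot)$ is $C^1$ with $\partial_u F=f$; because $f$ is continuous and any $H^1_{0,\Delta}$-bounded set is uniformly bounded in $\mathcal C(J)$, $f$ restricted to the relevant compact set is bounded, and one shows that $u\mapsto\int_0^T F(t,u^\sigma)\,\Delta t$ is Gateaux-differentiable with derivative $v\mapsto\int_0^T f(t,u^\sigma)v^\sigma\,\Delta t$, and that this derivative depends continuously on $u$ (using dominated convergence on the time scale together with the uniform convergence $u_n\to u$ in $\mathcal C(J)$ along a convergent sequence in $H^1_{0,\Delta}$). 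Adding the three pieces yields the formula
\begin{displaymath}
(\varphi'(u),v)=\int_0^T u^\Delta v^\Delta+\lambda\int_0^T u^\sigma v^\sigma+\sum_{j=1}^p I_j(u(t_j))v(t_j)-\int_0^T f(t,u^\sigma(t))v^\sigma(t)\,\Delta t,
\end{displaymath}
and comparing this with the definition of weak solution shows immediately that critical points of $\varphi$ are exactly the weak solutions of $(P)$.

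For weak lower semicontinuity: suppose $u_n\rightharpoonup u$ in $H^1_{0,\Delta}(J)$. The quadratic form $\frac12 a(u,u)=\frac12\|u^\Delta\|_{L^2_\Delta}^2+\frac{\lambda}{2}\|u^\sigma\|_{L^2_\Delta}^2$ is convex (this is where $\lambda>-\lambda_1$ enters, via Proposition \ref{wirt}, to guarantee that $a$ is a nonnegative, coercive form, so $\frac12 a(\cdot,\cdot)$ is convex and continuous, hence weakly l.s.c.). The remaining two terms I would argue are in fact weakly \emph{continuous}: by the compact-type behaviour of the embedding $H^1_{0,\Delta}(J)\hookrightarrow\mathcal C(J)$ on a reflexive space — or, more concretely, by extracting from $u_n\rightharpoonup u$ a subsequence that converges uniformly on $J$ (bounded sequences in $H^1$ are equicontinuous by Proposition \ref{cj} applied on subintervals, so Arzelà--Ascoli applies) — we get $u_n(t_j)\to u(t_j)$ and $u_n^\sigma\to u^\sigma$ uniformly, whence $\sum_j\int_0^{u_n(t_j)}I_j\to\sum_j\int_0^{u(t_j)}I_j$ and $\int_0^T F(t,u_n^\sigma)\,\Delta t\to\int_0^T F(t,u^\sigma)\,\Delta t$ by continuity of $I_j$ and $F$. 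A lower bound along the original sequence then follows from the usual subsequence argument. Combining, $\varphi$ is a sum of a weakly l.s.c. term and weakly continuous terms, hence weakly l.s.c.

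The main obstacle I anticipate is the careful handling of the Nemytskii operator $u\mapsto\int_0^T F(t,u^\sigma(t))\,\Delta t$ and its derivative in the time-scale setting: one needs the $\Delta$-measurability and integrability facts for $L^p_\Delta$ spaces, a dominated-convergence theorem for $\Delta$-integrals, and the fact that $u\mapsto u^\sigma$ behaves well from $H^1_{0,\Delta}$ into $L^2_\Delta$ — all of which are available from the references \cite{aopv1,aopv4,cre} cited in Section 2 but must be invoked correctly. Everything else (the quadratic form, the finitely many impulsive point-evaluation terms) is routine once the embedding of Proposition \ref{cj} is in hand.
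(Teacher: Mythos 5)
The paper states this proposition without proof (the text only says ``One can deduce, from the properties of $H$, $f$ and $I_j$, the following regularity properties''), so there is no argument in the source to compare against; your proposal supplies the missing proof, and it is the standard and correct one. The three-way decomposition, the use of Proposition \ref{cj} to control point evaluations and the composition with $f$ and $I_j$, the Nemytskii/dominated-convergence computation of $\varphi'$, and the identification of critical points with weak solutions all go through as you describe. Two small points worth keeping straight when writing it out: (i) for weak lower semicontinuity of the quadratic part you must treat $a(u,u)$ as a single positive semidefinite form (which is exactly what $\lambda>-\lambda_1$ and Proposition \ref{wirt} give you), since for $\lambda<0$ the piece $\tfrac{\lambda}{2}\int_0^T(u^\sigma)^2$ alone is weakly \emph{upper} semicontinuous; alternatively, $u\mapsto\int_0^T(u^\sigma)^2$ is weakly continuous by the same uniform-convergence argument you use for the other terms, so either route closes. (ii) The equicontinuity needed for your Arzel\`a--Ascoli step follows from the H\"older estimate $\arrowvert u(t)-u(s)\arrowvert\le\Arrowvert u^{\Delta}\Arrowvert_{L^{2}_{\Delta}}(t-s)^{1/2}$ on the time scale, and the uniform limit is identified with $u$ because point evaluations are continuous linear functionals on $H^{1}_{0,\Delta}(J)$; stating this explicitly makes the weak continuity of the impulsive and nonlinear terms airtight.
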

	\begin{teor}
	Suppose that $f$ is bounded and that the impulsive functions $I_{j}$ are bounded. Then there is a critical point of $\varphi$, and $(P)$ has at least one solution.
	\end{teor}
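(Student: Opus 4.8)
The plan is to obtain the solution as a global minimizer of the functional $\varphi$ defined in (\ref{phi}), using the direct method of the calculus of variations: since the previous proposition already guarantees that $\varphi$ is weakly lower semi-continuous and that its critical points are weak solutions of $(P)$, it suffices to show that $\varphi$ is coercive on $H_{0,\Delta}^{1}(J)$, for then a minimizing sequence is bounded, hence (the space being a Hilbert space, thus reflexive) has a weakly convergent subsequence, and weak lower semi-continuity forces the weak limit to be a minimizer, which is therefore a critical point and a weak solution.

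The core of the argument is the coercivity estimate, so I would carry it out as follows. First, split $\varphi(u)=\frac{1}{2}\int_0^T (u^\Delta)^2+\frac{\lambda}{2}\int_0^T(u^\sigma)^2+\sum_{j=1}^p\int_0^{u(t_j)}I_j(t)\,dt-\int_0^T F(t,u^\sigma(t))\,\Delta t$ and bound each term from below. For the quadratic part, when $\lambda\ge 0$ it is already $\ge\frac12\|u\|^2$; when $-\lambda_1<\lambda<0$, apply Proposition \ref{wirt} to get $\int_0^T(u^\sigma)^2\le\frac{1}{\lambda_1}\int_0^T(u^\Delta)^2$, so $\frac12\int_0^T(u^\Delta)^2+\frac{\lambda}{2}\int_0^T(u^\sigma)^2\ge\frac12\bigl(1+\frac{\lambda}{\lambda_1}\bigr)\|u\|^2$ with the constant $1+\lambda/\lambda_1>0$. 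Next, since $f$ is bounded, say $|f(t,x)|\le M$, we have $|F(t,u^\sigma(t))|\le M|u^\sigma(t)|$, and by the continuous embedding (Proposition \ref{cj}) together with (\ref{des1}) one gets $\bigl|\int_0^T F(t,u^\sigma(t))\,\Delta t\bigr|\le M\int_0^T|u^\sigma|\le C_1\|u\|$ for a constant $C_1$ depending only on $T$ and $M$. Similarly, since each $I_j$ is bounded, say $|I_j(s)|\le N$, we have $\bigl|\int_0^{u(t_j)}I_j(t)\,dt\bigr|\le N|u(t_j)|\le N\|u\|_{\mathcal{C}(J)}\le C_2\|u\|$, again using Proposition \ref{cj} and (\ref{des1}). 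Combining, $\varphi(u)\ge c_0\|u\|^2-(C_1+pC_2)\|u\|$ for some $c_0>0$, which tends to $+\infty$ as $\|u\|\to\infty$; this is the desired coercivity.

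With coercivity established, the rest is standard and short: take a minimizing sequence $(u_n)$ for $\varphi$; coercivity makes it bounded in $H_{0,\Delta}^{1}(J)$; reflexivity of the Hilbert space yields a subsequence $u_n\rightharpoonup u$; weak lower semi-continuity gives $\varphi(u)\le\liminf\varphi(u_n)=\inf\varphi$, so $u$ is a global minimizer; minimizers are critical points since $\varphi$ is differentiable; and by the previous proposition every critical point is a weak solution of $(P)$. Hence $(P)$ has at least one solution.

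I expect the only real obstacle to be the treatment of the quadratic form when $\lambda$ is negative: one must invoke the sharp Wirtinger-type inequality of Proposition \ref{wirt} with the precise eigenvalue $\lambda_1$ to keep the leading coefficient $1+\lambda/\lambda_1$ strictly positive under the standing hypothesis $\lambda>-\lambda_1$. Everything else — the bounds coming from boundedness of $f$ and the $I_j$, and the embedding and Poincaré-type inequalities (\ref{des1})–(\ref{des2}) and Proposition \ref{cj} — is routine. One should also take a little care that the embedding constant and the constant $c$ in (\ref{des1}) are absorbed cleanly so that the subquadratic terms really are $O(\|u\|)$, but this is bookkeeping rather than a genuine difficulty.
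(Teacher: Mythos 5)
Your proposal is correct and follows essentially the same route as the paper: both establish coercivity of $\varphi$ by using $\lambda>-\lambda_{1}$ together with Proposition \ref{wirt} to get a positive quadratic leading term, then bound the nonlinear and impulsive contributions linearly in $\Arrowvert u\Arrowvert$ via the boundedness of $f$ and $I_{j}$, Proposition \ref{cj} and (\ref{des1}), and finally invoke the standard minimization result for coercive, weakly lower semi-continuous functionals on a reflexive space (the paper cites Th.~1.1 of \cite{ma} where you spell out the minimizing-sequence argument). The only cosmetic difference is that you make the constant $\alpha=1+\lambda/\lambda_{1}$ explicit where the paper simply asserts the existence of some $\alpha>0$.
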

	\begin{proof}
	Take $M>0$ and $M_{j}>0$, $j=1,2,\cdots,p$,  such that
	\begin{displaymath}
\arrowvert f(t,u)\arrowvert\le M \quad \forall (t,u)\in\lbrack 0,T\rbrack_{\mathbb{T}}\times\mathbb{R},
\end{displaymath}
and
\begin{displaymath}
\arrowvert I_{j}(u)\arrowvert\le M_{j} \quad \forall u\in\mathbb{R}, \ j=1,2,\cdots,p.
\end{displaymath}

	Using that $\lambda>-\lambda_{1}$ there exists $\alpha>0$ sucht that for any $u\in H_{0,\Delta}^{1}(J)$
	\begin{displaymath}\begin{array}{lcl} 
	\displaystyle\varphi(u)&\ge & \displaystyle \frac{\alpha}{2}\Arrowvert u\Arrowvert^{2}+\sum_{j=1}^{p}\int_{0}^{u(t_{j})}I_{j}(t)dt-\int_{0}^{T}F(t,u^{\sigma}(t))\Delta t\ge \\

&\ge&\displaystyle \frac{\alpha}{2}\Arrowvert u\Arrowvert^{2}-\sum_{j=1}^{p}M_{j}\arrowvert u(t_{j})\arrowvert - M\int_{0}^{T}\arrowvert u^{\sigma}(t)\arrowvert\Delta t.
	\end{array} 
	\end{displaymath}

Thus, using the proposition \ref{cj}, (\ref{des1}) and $m=\max_{j=1,\cdots,p}\{M,M_{j}\}$ we have
	
	\begin{displaymath}\begin{array}{lcl} 
	
\displaystyle \varphi(u) &\ge& \displaystyle\frac{\alpha}{2}\Arrowvert u\Arrowvert^{2}-m \left(\sum_{j=1}^{p}\arrowvert u(t_{j})\arrowvert - \int_{0}^{T}\arrowvert u^{\sigma}(t)\arrowvert\Delta t \right)\\
\\
&\ge& \displaystyle \frac{\alpha}{2}\Arrowvert u\Arrowvert^{2}-m \left(p\Arrowvert u\Arrowvert_{\mathcal{C}(J)} + T\Arrowvert u\Arrowvert_{\mathcal{C}(J)}\right)\equiv\frac{\alpha}{2}\Arrowvert u\Arrowvert^{2}-m \rho\Arrowvert u\Arrowvert_{\mathcal{C}(J)}\\
\\
 &\ge& \displaystyle\frac{\alpha}{2}\Arrowvert u\Arrowvert^{2}-m\rho K\Arrowvert u\Arrowvert_{H_{0,\Delta}^{1}}\ge\frac{\alpha}{2}\Arrowvert u\Arrowvert^{2}-m\rho Kc\Arrowvert u\Arrowvert
	\end{array} 
	\end{displaymath}

where $\rho=p+T$.

This implies that $\lim_{\Arrowvert u\Arrowvert\to\infty}{\varphi(u)}=+\infty$, and $\varphi$ is coercive. Hence, (Th. 1.1 of \cite{ma}) $\varphi$ has a minimum, which is a critical point of $\varphi$. \qed
	\end{proof}
	
\begin{teor}
Suppose that $f$ is sublinear and the impulsive functions $I_{j}$ have sublinear growth. Then there is a critical point of $\varphi$ and $(P)$ has at least one solution.
\end{teor}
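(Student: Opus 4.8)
The plan is to show that the energy functional $\varphi$ in \eqref{phi} is again coercive on $H_{0,\Delta}^{1}(J)$, now under the weaker hypothesis that $f$ and the $I_j$ grow sublinearly rather than being bounded, and then to invoke the same variational principle (Th.~1.1 of \cite{ma}) to obtain a minimizer, which by the preceding Proposition is a weak solution of $(P)$. So the structure of the argument mirrors the previous theorem's proof: estimate $\varphi(u)$ from below by $\tfrac{\alpha}{2}\|u\|^{2}$ minus lower-order terms, and let $\|u\|\to\infty$.

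First I would fix the meaning of the hypotheses: say there are constants $a_1,b_1\ge 0$ and $\gamma\in[0,1)$ with $|f(t,u)|\le a_1+b_1|u|^{\gamma}$ for all $(t,u)$, and similarly $|I_j(u)|\le a_j'+b_j'|u|^{\gamma_j}$ with $\gamma_j\in[0,1)$; set $\beta=\max\{\gamma,\gamma_1,\dots,\gamma_p\}<1$. Integrating, $|F(t,u^\sigma)|\le a_1|u^\sigma|+\tfrac{b_1}{1+\gamma}|u^\sigma|^{1+\gamma}$ and $\bigl|\int_0^{u(t_j)}I_j\bigr|\le a_j'|u(t_j)|+\tfrac{b_j'}{1+\gamma_j}|u(t_j)|^{1+\gamma_j}$. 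Then, exactly as before, using coercivity of $a$ (valid since $\lambda>-\lambda_1$, giving $\tfrac12 a(u,u)\ge\tfrac{\alpha}{2}\|u\|^2$), Proposition~\ref{cj} to bound $\|u\|_{\mathcal C(J)}$ and $\int_0^T|u^\sigma|\Delta t\le T\|u\|_{\mathcal C(J)}$ by a constant times $\|u\|$, one gets
\begin{displaymath}
\varphi(u)\ \ge\ \frac{\alpha}{2}\|u\|^{2}-C_1\|u\|-C_2\|u\|^{1+\beta}
\end{displaymath}
for suitable constants $C_1,C_2\ge 0$ depending only on $T$, $p$, $K$, $c$ and the growth constants. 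Since $1+\beta<2$, the quadratic term dominates, so $\varphi(u)\to+\infty$ as $\|u\|\to\infty$; $\varphi$ is coercive, hence (being also weakly lower semi-continuous by the Proposition) attains a global minimum on the Hilbert space $H_{0,\Delta}^{1}(J)$. That minimizer is a critical point of $\varphi$, and therefore a weak solution of $(P)$.

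The only genuinely delicate point is bookkeeping the exponents: one must collect all the sublinear contributions — from $F$ via $\int_0^T$, and from each $\int_0^{u(t_j)}I_j$ — into a single term of the form $C_2\|u\|^{1+\beta}$, which requires using $|u(t_j)|^{1+\gamma_j}\le\|u\|_{\mathcal C(J)}^{1+\gamma_j}$ and then, for $\|u\|$ large (the only regime that matters for coercivity), absorbing the various powers $1+\gamma_j\le 1+\beta$ into the largest one up to a multiplicative constant (or simply adding a harmless additive constant for the $\|u\|$-small regime). Everything else is a routine repetition of the estimates already carried out for the bounded case, with $M\|u^\sigma\|_{L^1}$ replaced by its sublinear analogue; no new functional-analytic input beyond what is quoted in Section~2 and the previous theorem is needed, so I expect no real obstacle, only care with constants. \qed
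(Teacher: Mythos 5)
Your proposal is correct and follows essentially the same route as the paper: integrate the sublinear growth bounds on $f$ and $I_j$ to control $F$ and $\int_0^{u(t_j)}I_j$, use the coercivity constant $\alpha$ from $\lambda>-\lambda_1$ together with Proposition \ref{cj} and (\ref{des1}) to obtain $\varphi(u)\ge\frac{\alpha}{2}\Arrowvert u\Arrowvert^{2}-C_1\Arrowvert u\Arrowvert-C_2\Arrowvert u\Arrowvert^{1+\beta}$ with $1+\beta<2$, and conclude coercivity and hence a minimizer. Your handling of the distinct exponents $\gamma_j$ via their maximum is in fact slightly more careful than the paper's, which tacitly collapses them into a single $\gamma$.
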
	
\begin{proof}
Let $a$, $b$, $a_{j}$, $b_{j}>0$, and $\gamma,\ \gamma_{j}\in\lbrack 0,1)$, $j=1,2,\cdots,p$, such that
	\begin{displaymath}
	\arrowvert f(t,u)\arrowvert\le a+b\arrowvert u\arrowvert^{\gamma}\quad and \quad \arrowvert I_{j}(u)\arrowvert\le a_{j}+b_{j}\arrowvert u\arrowvert^{\gamma_{j}} \quad \forall t\in\lbrack 0,T\rbrack_{\mathbb{T}},  u\in\mathbb{R}.
	\end{displaymath}
		
	Again using that $\lambda>-\lambda_{1}$, the proposition \ref{cj}, (\ref{des1}) and $m=\max_{j=1,\cdots,p}\{a,a_{j}\}$, $\widetilde{m}=\max_{j=1,\cdots,p}\{b,b_{j}\}$ we have
	\begin{displaymath}
	\varphi(u)\ge\frac{\alpha}{2}\Arrowvert u\Arrowvert^{2}-\beta\Arrowvert u\Arrowvert-\delta\Arrowvert u\Arrowvert^{\gamma+1}
	\end{displaymath}
	where $\beta=m\rho Kc$ and $\delta=\widetilde{m}\rho K^{\gamma+1}c^{\gamma+1}$.
	
	Since $\gamma+1<2$, then $\lim_{\Arrowvert u\Arrowvert\to\infty}\varphi(u)=+\infty$ for every $u\in H_{0,\Delta}^{1}(J)$.
\end{proof}

\section{Impulsive nonlinear problem with linear derivative dependence}

	Consider the following problem
\begin{displaymath}
(NP)\left\{\begin{array}{lcl}
 -u^{\Delta \Delta}(t)+g(t)u^{\Delta}(\sigma(t))+\lambda u^{\sigma}(t)=f(t,u^{\sigma}(t));& &\textrm{$\Delta$-a.e.}\quad t\in J^{\kappa^{2}}\\
\noalign{\bigskip}  -(u^{\Delta}(t_{j}^{+})-u^{\Delta}(t_{j}^{-}))=I_{j}(u(t_{j})),& &j=1,2,\cdots,p\\
\noalign{\bigskip}  u(0)=0=u(T).
\end{array}
\right .
\end{displaymath}
where $f$ and $I_{j}$, $j=1,\cdots,p$ are continuous and $g$ is continuous and regressive.

We assume that $\lambda>-m\lambda_{1}/M$. Here, $m=\min_{t\in J}e_{g}(t,0)$, $M=\max_{t\in J}e_{g}(t,0)$ where $e_{g}(t,0)$ is the exponential function. Note that, as g is regressive, $e_{g}(\cdot,0)$ is the solution of the problem
$$y^{\Delta}= g(t)y, \quad y(0) = 1.$$

We transform the problem (NP) into the following equivalent form:
\begin{displaymath}
(NPE)\left\{ \begin{array}{lcl}
-(e_{g}(t,0)u^{\Delta}(t))^{\Delta}+\lambda e_{g}(t,0)u^{\sigma}(t)=e_{g}(t,0)f(t,u^{\sigma}(t))& &\textrm{$\Delta$-a.e.}\quad t\in J^{\kappa^{2}}\\ 
\noalign{\bigskip} -(u^{\Delta}(t_{j}^{+})-u^{\Delta}(t_{j}^{-}))= I_{j}(u(t_{j})),& & j=1,2,\ldots,p\\
\noalign{\bigskip} u(0)=u(T)=0
\end{array} \right.
\end{displaymath}

Obviously, the solutions of (NPE) are solutions of (NP). Consider the Hilbert space $H_{0,\Delta}^{1}(J)$ with the inner product:
\begin{displaymath}
(u,v)=\int_{0}^{T}e_{g}(t,0)u^{\Delta}(t)v^{\Delta}(t)\Delta t,
\end{displaymath}
and the norm induced 
\begin{displaymath}
\Arrowvert u\Arrowvert=\left(\int_{0}^{T}e_{g}(t,0)\arrowvert u^{\Delta}(t)\arrowvert^{2}\Delta t \right)^{\frac{1}{2}}.
\end{displaymath}

	 A weak solution of (NPE) is a function $u\in H_{0,\Delta}^{1}(J)$ such that
	 
$$\displaystyle\int_{0}^{T}e_{g}(t,0)u^{\Delta}(t)v^{\Delta}(t)\Delta t+\lambda\int_{0}^{T}e_{g}(t,0)u^{\sigma}(t)v^{\sigma}(t)\Delta t
	$$ $$=\displaystyle\sum_{j=1}^{p}e_{g}(t_{j},0)I_{j}(u(t_{j}))v(t_{j})+\int_{0}^{T}e_{g}(t,0)f(t,u^{\sigma}(t))v^{\sigma}(t)\Delta t.$$

Hence, a weak solution of (NP) is a critical point of the following functional:
	\begin{equation}
	\label{psi}
	\psi(u)=\frac{1}{2}A(u,u)-\sum_{j=1}^{p}e_{g}(t_{j},0)\int_{0}^{u(t_{j})}I_{j}(t)dt - \int_{0}^{T}e_{g}(t,0)F(t,u^{\sigma}(t))\Delta t,
	\end{equation}
where $$F(t,u)=\int_{0}^{u}f(t,\xi)d\xi,$$ 
	and $$	A(u,u)=\int_{0}^{T}e_{g}(t,0)u^{\Delta}(t)v^{\Delta}(t)\Delta t + \lambda\int_{0}^{T}e_{g}(t,0)u^{\sigma}(t)v^{\sigma}(t)\Delta t.$$
	
	It is evident that $A$ is bilinear, continuous and symmetric.
	
	\begin{lema}{(Theorem $38.A$ of \cite{zei})}\label{zeim}
	For the functional  $F:M\subset X\to\R$ with $M$ not empty, $\min_{u\in M}F(u)=a$ has a solutions in case the following hold:
	\begin{itemize}
	\item [$(i)$] $X$ is a reflexive Banach space.
	\item [$(ii)$] $M$ is bounded and weak sequentially closed.
	\item [$(iii)$] $\varphi$ is sequentially lower semi-continuous on $M$
	\end{itemize}
	\end{lema}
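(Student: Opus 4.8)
The plan is to apply the direct method of the calculus of variations. First I would set $a:=\inf_{u\in M}F(u)$, which is well defined as an element of $[-\infty,+\infty)$ since $M\ne\emptyset$, and choose a minimizing sequence $(u_n)_n\subset M$ with $F(u_n)\to a$. Because $M$ is bounded by hypothesis $(ii)$, the sequence $(u_n)_n$ is bounded in $X$.

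Next I would exploit reflexivity. By $(i)$ and the Eberlein--\v{S}mulian theorem, every bounded sequence in $X$ admits a weakly convergent subsequence, so there are a subsequence $(u_{n_k})_k$ and some $u_0\in X$ with $u_{n_k}\rightharpoonup u_0$. The second part of $(ii)$, namely the weak sequential closedness of $M$, then forces $u_0\in M$; in particular $F(u_0)$ is meaningful and $F(u_0)\ge a$ by definition of the infimum.

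Finally I would invoke the sequential weak lower semi-continuity of $F$ on $M$ from $(iii)$, which gives $F(u_0)\le\liminf_{k\to\infty}F(u_{n_k})=a$. Combining this with $F(u_0)\ge a$ yields $F(u_0)=a$; in particular $a>-\infty$ and the infimum is attained at $u_0\in M$, which is exactly the claimed conclusion $\min_{u\in M}F(u)=a$.

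The argument is routine once the three hypotheses are lined up, and the only step I would single out as the real crux is the extraction of the weakly convergent subsequence from the minimizing sequence: this is precisely where reflexivity of $X$ is indispensable, since in a non-reflexive space a bounded minimizing sequence need not have any weak cluster point and the method collapses. Everything else amounts to unwinding the definitions of \emph{weakly sequentially closed} and \emph{sequentially weakly lower semi-continuous} and chaining the resulting inequalities.
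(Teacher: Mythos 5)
Your argument is correct: it is the standard direct method of the calculus of variations, and the chain infimum $\to$ minimizing sequence $\to$ bounded $\to$ weakly convergent subsequence (reflexivity) $\to$ limit in $M$ (weak sequential closedness) $\to$ $F(u_0)\le a$ (weak sequential lower semi-continuity) is exactly the intended proof. Note, however, that the paper itself offers no proof of this lemma --- it is quoted verbatim as Theorem~38.A of the cited reference (Zeidler) and used as a black box --- so there is nothing in the paper to compare against beyond observing that your argument reproduces the classical one from that source, including the correct observation that attainment forces $a>-\infty$.
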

	
	\begin{lema}{(Analogous to lemma 2.2 of \cite{erb})}
	There exist constants $\beta>\alpha>0$ such that
	\begin{displaymath}
	\alpha\Arrowvert u\Arrowvert^{2}\le A(u,u)\le\beta\Arrowvert u\Arrowvert^{2},\quad u\in H_{0,\Delta}^{1}(J).
	\end{displaymath} 
	\end{lema}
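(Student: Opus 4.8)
The plan is to prove the two-sided bound by identifying $A(u,u)$ with a weighted combination of the squared norm $\Arrowvert u\Arrowvert^2 = \int_0^T e_g(t,0)|u^\Delta(t)|^2\,\Delta t$ and the term $\int_0^T e_g(t,0)(u^\sigma)^2\,\Delta t$, and then sandwiching the latter between multiples of the former. Recall $A(u,u)=\int_0^T e_g(t,0)(u^\Delta)^2\,\Delta t+\lambda\int_0^T e_g(t,0)(u^\sigma)^2\,\Delta t$. The first summand is exactly $\Arrowvert u\Arrowvert^2$, so everything reduces to controlling $\int_0^T e_g(t,0)(u^\sigma)^2\,\Delta t$.

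First I would use that $e_g(\cdot,0)$ is continuous and positive on the compact interval $J$ (it is regressive, hence never vanishes, and $e_g(0,0)=1>0$), so $0<m\le e_g(t,0)\le M$ on $J$ with $m,M$ as defined in the text. Then for any $u\in H_{0,\Delta}^1(J)$ one has $m\int_0^T (u^\sigma)^2\,\Delta t \le \int_0^T e_g(t,0)(u^\sigma)^2\,\Delta t \le M\int_0^T (u^\sigma)^2\,\Delta t$, and similarly $m\int_0^T (u^\Delta)^2\,\Delta t\le \Arrowvert u\Arrowvert^2\le M\int_0^T (u^\Delta)^2\,\Delta t$. Next I would invoke Proposition \ref{wirt} (Wirtinger/Poincaré-type inequality), which gives $\int_0^T (u^\sigma)^2\,\Delta t\le \frac{1}{\lambda_1}\int_0^T (u^\Delta)^2\,\Delta t$. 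Chaining these: $\int_0^T e_g(t,0)(u^\sigma)^2\,\Delta t \le \frac{M}{\lambda_1}\int_0^T(u^\Delta)^2\,\Delta t\le \frac{M}{m\lambda_1}\Arrowvert u\Arrowvert^2$.

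With this in hand I would split into the two cases $\lambda\ge 0$ and $-m\lambda_1/M<\lambda<0$. If $\lambda\ge 0$, the upper bound is immediate with $\beta=1+\lambda M/(m\lambda_1)$ (or even $\beta = 1 + \lambda\, C$ for the relevant constant), and the lower bound is trivial with $\alpha=1$ since the second summand is nonnegative. If $\lambda<0$, write $\lambda\int_0^T e_g(t,0)(u^\sigma)^2\,\Delta t \ge \lambda\cdot\frac{M}{m\lambda_1}\Arrowvert u\Arrowvert^2$ (the inequality flips because $\lambda<0$), so $A(u,u)\ge\bigl(1+\lambda M/(m\lambda_1)\bigr)\Arrowvert u\Arrowvert^2$; the assumption $\lambda>-m\lambda_1/M$ is exactly what makes $\alpha:=1+\lambda M/(m\lambda_1)>0$. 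For the upper bound when $\lambda<0$ one simply drops the negative term to get $A(u,u)\le\Arrowvert u\Arrowvert^2$, so $\beta=1$ works. In all cases one can then take a single $\alpha=\min\{1,1+\lambda M/(m\lambda_1)\}>0$ and $\beta=\max\{1,1+\lambda M/(m\lambda_1)\}$, and note $\beta>\alpha$ unless $\lambda=0$ (in the degenerate case $\lambda=0$ one has $A(u,u)=\Arrowvert u\Arrowvert^2$ and may take any $\alpha<1<\beta$).

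I do not anticipate a serious obstacle here; the only point requiring a little care is making sure the constants $m,M$ are genuinely positive and finite, which follows from continuity of $e_g(\cdot,0)$ on the compact set $J$ together with regressivity (no zeros) and the initial condition $e_g(0,0)=1$. A secondary subtlety is that Proposition \ref{wirt} is stated for $H_{0,\Delta}^1(J)$, which is exactly the space in play, so it applies directly; one should just make sure the interval endpoints $a=0$, $b=T$ match, which they do by the convention fixed earlier in the paper.
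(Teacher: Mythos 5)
Your proof is correct and follows essentially the same route as the paper, which simply states that by Poincar\'e's inequality one may take $\alpha=1$, $\beta=1+\frac{\lambda M}{\lambda_{1}m}$ when $\lambda\ge 0$ and $\alpha=1+\frac{\lambda M}{\lambda_{1}m}$, $\beta=1$ when $-m\lambda_{1}/M<\lambda<0$; you have merely filled in the chaining of $m\le e_{g}(t,0)\le M$ with Proposition \ref{wirt} that the paper leaves implicit. The only point worth flagging is that your justification of $m>0$ (regressivity plus $e_{g}(0,0)=1$ plus continuity) tacitly uses an intermediate-value argument that can fail on a disconnected time scale unless $g$ is positively regressive, but the paper itself assumes $m>0$ without comment, so this is not a gap relative to the paper's argument.
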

	\begin{proof}
	In fact, by Poincare's inequality, if $\lambda\ge 0$, we can take $\alpha=1$, $\beta=1+\frac{\lambda M}{\lambda_{1}m}$; if $\frac{-m\lambda_{1}}{M}<\lambda<0$, then we can take $\alpha=1+\frac{\lambda M}{\lambda_{1}m}$ and $\beta=1$.\qed
	\end{proof}
	
	\begin{lema}\label{max}
	If $u\in H_{0,\Delta}^{1}(J)$, then there exist a constant $\delta>0$, such that $\Arrowvert u\Arrowvert_{0}\le\delta\Arrowvert u\Arrowvert$, where
	\begin{displaymath}
	\displaystyle \Arrowvert u\Arrowvert_{0}=\max_{t\in\lbrack 0,T\rbrack_{\mathbb{T}}}\arrowvert u(t)\arrowvert.
	\end{displaymath}
	\end{lema}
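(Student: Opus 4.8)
The plan is to deduce this norm comparison from the results already assembled in the excerpt, chaining together the continuous embedding of Proposition \ref{cj}, the equivalence of norms from (\ref{des1})--(\ref{des2}), and the comparison between the weighted norm $\Arrowvert\cdot\Arrowvert$ attached to (NPE) and the standard norm $\Arrowvert\cdot\Arrowvert_{H_{0,\Delta}^{1}(J)}$ coming from the Poincar\'e inner product $(u,v)=\int_0^T u^\Delta v^\Delta\,\Delta t$. First I would observe that $\Arrowvert u\Arrowvert_0=\Arrowvert u\Arrowvert_{\mathcal C(J)}$ in the notation of Proposition \ref{cj}, so that proposition yields a constant $K>0$ with $\Arrowvert u\Arrowvert_0\le K\Arrowvert u\Arrowvert_{W_\Delta^{1,2}(J)}$ for every $u\in W_{0,\Delta}^{1,2}(J)=H_{0,\Delta}^1(J)$.

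Next I would control $\Arrowvert u\Arrowvert_{W_\Delta^{1,2}(J)}$ by the weighted norm. By (\ref{des1}) we have $\Arrowvert u\Arrowvert_{W_\Delta^{1,2}(J)}\le c\Arrowvert u^\Delta\Arrowvert_{L_\Delta^2(J^0)}$, and the latter quantity is exactly the norm induced by the Poincar\'e inner product used earlier in Section 3 (the unweighted $\Arrowvert\cdot\Arrowvert$). The only remaining point is to compare that unweighted quantity with the weighted norm of Section 4, $\Arrowvert u\Arrowvert=\bigl(\int_0^T e_g(t,0)\arrowvert u^\Delta(t)\arrowvert^2\,\Delta t\bigr)^{1/2}$. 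Since $g$ is continuous and regressive on the compact interval $J$, $e_g(\cdot,0)$ is continuous and strictly positive there, so with $m=\min_{t\in J}e_g(t,0)>0$ and $M=\max_{t\in J}e_g(t,0)$ (the very constants introduced just before (NPE)) one has the pointwise bound $m\arrowvert u^\Delta\arrowvert^2\le e_g(t,0)\arrowvert u^\Delta\arrowvert^2$, hence $\sqrt m\,\Arrowvert u^\Delta\Arrowvert_{L_\Delta^2(J^0)}\le\Arrowvert u\Arrowvert$, i.e. $\Arrowvert u^\Delta\Arrowvert_{L_\Delta^2(J^0)}\le m^{-1/2}\Arrowvert u\Arrowvert$.

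Putting the three inequalities together gives $\Arrowvert u\Arrowvert_0\le K\Arrowvert u\Arrowvert_{W_\Delta^{1,2}(J)}\le Kc\,\Arrowvert u^\Delta\Arrowvert_{L_\Delta^2(J^0)}\le Kc\,m^{-1/2}\Arrowvert u\Arrowvert$, so the claim holds with $\delta:=Kc/\sqrt m$, a constant depending only on $T$ and on $g$ through $m$. I would state this as the conclusion.

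There is essentially no deep obstacle here; the lemma is a bookkeeping step that records a Sobolev-type $L^\infty$ bound in the weighted norm that will later be used to estimate the impulsive terms $e_g(t_j,0)\int_0^{u(t_j)}I_j$ and the nonlinear term in $\psi$. The only mild subtlety to get right is the positivity and boundedness of $e_g(\cdot,0)$: this needs $g$ to be regressive and continuous so that the exponential is well defined, continuous, and nowhere zero on the compact time-scale interval $J$, which is exactly the hypothesis imposed on $g$ and guarantees $0<m\le M<\infty$. Everything else is a direct concatenation of Proposition \ref{cj} and inequality (\ref{des1}).
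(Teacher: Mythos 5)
Your proof is correct, but it takes a different route from the paper's. The paper argues directly: since $u(0)=0$, one has $\arrowvert u(t)\arrowvert\le\int_{0}^{T}\arrowvert u^{\Delta}(s)\arrowvert\Delta s$, and then applies the Cauchy--Schwarz inequality with the weight split as $\arrowvert u^{\Delta}\arrowvert=e_{g}(\cdot,0)^{-1/2}\cdot e_{g}(\cdot,0)^{1/2}\arrowvert u^{\Delta}\arrowvert$, which gives $\arrowvert u(t)\arrowvert\le\bigl(\int_{0}^{T}e_{g}(s,0)^{-1}\Delta s\bigr)^{1/2}\Arrowvert u\Arrowvert\le\sqrt{T/m}\,\Arrowvert u\Arrowvert$, i.e.\ the explicit constant $\delta=\sqrt{T/m}$. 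You instead factor the argument through the already-stated machinery: Proposition \ref{cj} for the sup-norm bound, inequality (\ref{des1}) for the reduction to $\Arrowvert u^{\Delta}\Arrowvert_{L_{\Delta}^{2}}$, and the pointwise comparison $m\arrowvert u^{\Delta}\arrowvert^{2}\le e_{g}(t,0)\arrowvert u^{\Delta}\arrowvert^{2}$ to pass to the weighted norm, ending with $\delta=Kc/\sqrt{m}$. Both chains are valid; the paper's version is shorter, self-contained, and produces a cleaner explicit constant, while yours has the advantage of reusing the stated embeddings without re-deriving them. One small caution: you assert that regressivity and continuity of $g$ force $e_{g}(\cdot,0)$ to be strictly positive. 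On a general time scale, regressivity only guarantees that $1+\mu(t)g(t)\neq 0$, hence that $e_{g}$ is nonvanishing; strict positivity requires positive regressivity ($1+\mu(t)g(t)>0$). The paper makes the same implicit assumption by setting $m=\min_{t\in J}e_{g}(t,0)$ and dividing by $\sqrt{m}$, so this does not distinguish your argument from theirs, but it is worth flagging as a hypothesis rather than a consequence.
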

	\begin{proof}
	The result is followed by the following inequalities:
	$$
	\begin{array}{lcl}
	\displaystyle\arrowvert u(t)\arrowvert&\le&\int_{0}^{T}\arrowvert u^{\Delta}(s)\arrowvert\Delta s\le \left(\int_{0}^{T}\frac{1}{e_{g}(t,0)}\right)^{\frac{1}{2}}\left(\int_{0}^{T}e_{g}(t,0)\arrowvert u^{\Delta}(s)\arrowvert^{2}\Delta s\right)^{\frac{1}{2}}\\
\displaystyle&\le&\sqrt{\frac{T}{m}}\Arrowvert u\Arrowvert=\delta\Arrowvert u\Arrowvert.
	\end{array}
$$ \qed
	\end{proof}
	
	\begin{lema}\label{sci}
	The functional $\psi$ defined by (\ref{psi}) is continuous, continuously differentiable and weakly lower semi-continuous.
	\end{lema}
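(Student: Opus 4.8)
The plan is to decompose $\psi = \frac12 A(u,u) - \Phi(u) - \Psi(u)$, where $\Phi(u) = \sum_{j=1}^p e_g(t_j,0)\int_0^{u(t_j)} I_j(t)\,dt$ and $\Psi(u) = \int_0^T e_g(t,0) F(t,u^\sigma(t))\,\Delta t$, and treat each piece separately. For the quadratic form: since $A$ is bilinear, continuous and symmetric (already noted in the excerpt) and, by Lemma 4.5, equivalent to $\|\cdot\|^2$, the map $u\mapsto \frac12 A(u,u)$ is a continuous, everywhere Fréchet-differentiable function with derivative $(u,\cdot)_A$, and it is weakly lower semi-continuous because it is convex (being a nonnegative quadratic form, by the lower bound $\alpha\|u\|^2\le A(u,u)$) and strongly continuous, hence w.l.s.c. by the standard convexity criterion.

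Next I would handle the two nonlinear terms $\Phi$ and $\Psi$. The key tool is Proposition 2.3 (equivalently Lemma 4.4, $\|u\|_0\le\delta\|u\|$), which gives that the embedding $H^1_{0,\Delta}(J)\hookrightarrow \mathcal{C}(J)$ is continuous; I also recall the stronger fact, standard for these Sobolev spaces on time scales, that this embedding is \emph{compact}, so that $u_n\rightharpoonup u$ in $H^1_{0,\Delta}(J)$ implies $u_n\to u$ uniformly on $[0,T]_{\mathbb T}$ (in particular $u_n(t_j)\to u(t_j)$ and $u_n^\sigma\to u^\sigma$ pointwise, dominated). With continuity of $I_j$ and of $f$ (hence of $F$), together with the Carathéodory/continuity structure and the boundedness of $e_g(\cdot,0)$ between $m$ and $M$, the superposition operators behave well: $\Phi$ is continuous and $C^1$ with $(\Phi'(u),v)=\sum_j e_g(t_j,0) I_j(u(t_j)) v(t_j)$, and $\Psi$ is continuous and $C^1$ with $(\Psi'(u),v)=\int_0^T e_g(t,0) f(t,u^\sigma(t)) v^\sigma(t)\,\Delta t$; differentiability is checked by the usual mean-value-plus-dominated-convergence argument, using $\|v\|_0\le\delta\|v\|$ to control increments uniformly. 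For weak lower semi-continuity of $\psi$ it suffices that $\Phi$ and $\Psi$ are weakly \emph{continuous}: if $u_n\rightharpoonup u$ then $u_n\to u$ in $\mathcal C(J)$, so $I_j(u_n(t_j))\to I_j(u(t_j))$ and $\int_0^{u_n(t_j)}I_j\to\int_0^{u(t_j)}I_j$ by continuity, giving $\Phi(u_n)\to\Phi(u)$; likewise $F(t,u_n^\sigma(t))\to F(t,u^\sigma(t))$ uniformly in $t$, so $\Psi(u_n)\to\Psi(u)$. Combining, $\psi(u)=\frac12 A(u,u)-\Phi(u)-\Psi(u)$ is a w.l.s.c. quadratic term minus two weakly continuous terms, hence w.l.s.c., and it is continuous and $C^1$ as a sum of such functions, with $(\psi'(u),v)=A(u,v)-\sum_j e_g(t_j,0)I_j(u(t_j))v(t_j)-\int_0^T e_g(t,0)f(t,u^\sigma(t))v^\sigma(t)\,\Delta t$, which is exactly the weak-solution identity for $(NPE)$.

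The main obstacle is the rigorous justification that the nonlinear functionals $\Phi$ and $\Psi$ are weakly continuous and continuously differentiable in the time-scale setting: this hinges on the compactness of the embedding $H^1_{0,\Delta}(J)\hookrightarrow\mathcal C(J)$ and on a Lebesgue-type dominated convergence theorem for $\Delta$-integrals, both of which are available from \cite{aopv1,aopv4,cre} but must be invoked carefully. Once uniform convergence of $u_n^\sigma$ and boundedness of $e_g(\cdot,0)$ are in hand, the remaining estimates are routine: one bounds the relevant difference quotients using continuity of $f$, $F$, $I_j$ on compact sets (the uniform bound $\|u_n\|_0\le\delta\sup_n\|u_n\|$ confines the arguments to a fixed compact interval) and passes to the limit. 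I would therefore structure the write-up as: (1) the quadratic part, (2) compact embedding and its consequences, (3) weak continuity of $\Phi,\Psi$, (4) Fréchet differentiability of $\Phi,\Psi$, (5) assembling $\psi$ and identifying $\psi'$.
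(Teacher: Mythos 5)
The paper offers no proof of this lemma at all: it is stated bare (exactly as Proposition 3.1 for $\varphi$ is), so there is nothing in the source to compare your argument against line by line. On its own merits your proposal is correct and complete in outline. The decomposition $\psi=\tfrac12 A(u,u)-\Phi(u)-\Psi(u)$, the treatment of the quadratic part as a continuous convex (hence weakly lower semi-continuous) functional via the bound $\alpha\Arrowvert u\Arrowvert^{2}\le A(u,u)$, and the reduction of the two nonlinear terms to weakly continuous, $C^{1}$ superposition functionals is the standard route and is surely what the authors had in mind. The one ingredient you rightly isolate as non-routine is the compactness of the embedding $H_{0,\Delta}^{1}(J)\hookrightarrow\mathcal{C}(J)$ (so that weak convergence upgrades to uniform convergence); note that the paper itself uses exactly this fact, again without citation, in the Palais--Smale step of Theorem 4.3 (``Then the sequence $\{u_{k}\}$ converges uniformly to $u$ in $\mathcal{C}(J)$''), and it does follow from the H\"older estimate $\arrowvert u(t)-u(s)\arrowvert\le\Arrowvert u^{\Delta}\Arrowvert_{L_{\Delta}^{2}}\arrowvert t-s\arrowvert^{1/2}$ together with Arzel\`a--Ascoli, or from the references \cite{aopv1,aopv4,cre} as you say. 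So your write-up would actually supply a proof the paper omits rather than duplicate or diverge from one; the only caution is to make the compact-embedding citation explicit, since everything else (uniform continuity of $F$ and $I_{j}$ on the compacta determined by $\Arrowvert u\Arrowvert_{0}\le\delta\Arrowvert u\Arrowvert$, the bounds $0<m\le e_{g}(\cdot,0)\le M$, and dominated convergence for $\Delta$-integrals) is routine once that is in place.
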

	\begin{teor}
	Suppose that $\lambda>\frac{-m\lambda_{1}}{M}$, $f$ and $I_{j}$ are bounded, $j=1,2,\cdots,p$, then the (NP) has at least one solution.
	\end{teor}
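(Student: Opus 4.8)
The plan is to prove the theorem exactly as the boundedness case for $(P)$ was handled in Section 3: show that the functional $\psi$ in (\ref{psi}) is coercive on $H_{0,\Delta}^{1}(J)$, combine this with the weak lower semi-continuity from Lemma \ref{sci}, and conclude that $\psi$ attains a minimum; that minimizer is a critical point of $\psi$, hence, by the variational identity preceding (\ref{psi}), a weak solution of $(NPE)$, and therefore a solution of $(NP)$.

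First I would fix, by the hypothesis, bounds $\arrowvert f(t,u)\arrowvert\le M_{f}$ for all $(t,u)\in\lbrack 0,T\rbrack_{\mathbb{T}}\times\mathbb{R}$ and $\arrowvert I_{j}(u)\arrowvert\le M_{j}$ for all $u\in\mathbb{R}$, $j=1,\dots,p$, and then estimate the three pieces of $\psi$. The quadratic part is controlled from below by the equivalence lemma, $\tfrac12 A(u,u)\ge\tfrac{\alpha}{2}\Arrowvert u\Arrowvert^{2}$ with $\alpha>0$ precisely because $\lambda>-m\lambda_{1}/M$. For the impulsive terms, $\bigl\arrowvert\int_{0}^{u(t_{j})}I_{j}(t)\,dt\bigr\arrowvert\le M_{j}\arrowvert u(t_{j})\arrowvert\le M_{j}\Arrowvert u\Arrowvert_{0}\le M_{j}\delta\Arrowvert u\Arrowvert$ by Lemma \ref{max}, and $e_{g}(t_{j},0)\le M$. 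For the nonlinear term, $\arrowvert F(t,u^{\sigma}(t))\arrowvert\le M_{f}\arrowvert u^{\sigma}(t)\arrowvert\le M_{f}\Arrowvert u\Arrowvert_{0}\le M_{f}\delta\Arrowvert u\Arrowvert$ together with $e_{g}(t,0)\le M$ gives $\bigl\arrowvert\int_{0}^{T}e_{g}(t,0)F(t,u^{\sigma}(t))\,\Delta t\bigr\arrowvert\le M M_{f}T\delta\Arrowvert u\Arrowvert$. Collecting these bounds yields
$$\psi(u)\ge\frac{\alpha}{2}\Arrowvert u\Arrowvert^{2}-C\Arrowvert u\Arrowvert,\qquad C:=M\delta\Bigl(\sum_{j=1}^{p}M_{j}+M_{f}T\Bigr),$$
so $\psi(u)\to+\infty$ as $\Arrowvert u\Arrowvert\to\infty$.

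Next I would pick $R>2C/\alpha$, so that $\psi(u)>0=\psi(0)$ whenever $\Arrowvert u\Arrowvert\ge R$, and apply Lemma \ref{zeim} with $X=H_{0,\Delta}^{1}(J)$ (reflexive, being a Hilbert space) and $M=\{u\in H_{0,\Delta}^{1}(J):\Arrowvert u\Arrowvert\le R\}$, which is bounded and weakly sequentially closed, while Lemma \ref{sci} provides the required sequential lower semi-continuity. Thus $\psi$ attains its minimum over $\overline{B}_{R}$ at some $u_{0}$; by the choice of $R$ this is a global minimum of $\psi$ on $H_{0,\Delta}^{1}(J)$ attained at an interior point, so $\psi'(u_{0})=0$. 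Hence $u_{0}$ solves the weak formulation of $(NPE)$, and therefore is a solution of $(NP)$.

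I expect no conceptual obstacle here; the only care needed is the bookkeeping of the weights $e_{g}(t,0)$, which by definition of $m$ and $M$ are pinched between $m>0$ and $M$, and of the forward jump operator $\sigma$, for which one just uses $\arrowvert u^{\sigma}(t)\arrowvert\le\Arrowvert u\Arrowvert_{0}$, together with making the argument that the constrained minimizer on $\overline{B}_{R}$ is in fact interior and hence an unconstrained critical point. Everything else is a direct transcription of the coercivity argument already carried out for $(P)$.
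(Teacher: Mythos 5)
Your proposal is correct and follows essentially the same route as the paper: establish coercivity of $\psi$ via the bound $\psi(u)\ge\frac{\alpha}{2}\Arrowvert u\Arrowvert^{2}-C\Arrowvert u\Arrowvert$ and conclude with weak lower semi-continuity that a minimizer, hence a critical point, exists. The only (harmless) differences are that you estimate the term $\int_{0}^{T}e_{g}(t,0)F(t,u^{\sigma}(t))\Delta t$ through the sup-norm bound of Lemma \ref{max} rather than through Proposition \ref{wirt} as the paper does (changing only the constant), and that you make explicit, via Lemma \ref{zeim} on a large ball, the direct-method step the paper leaves implicit.
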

	\begin{proof}
	Take $B>0$ and $B_{j}>0$, $j=1,\cdots,p$, such that
	$$\arrowvert f(t,u)\arrowvert\le B,\quad\forall(t,u)\in\lbrack 0,T\rbrack_{\mathbb{T}}\times\mathbb{R},$$
	$$\arrowvert I_{j}(u)\arrowvert\le B_{j},\quad\forall u\in\mathbb{R},j=1,2,\cdots,p.$$
	
	For any $u\in H_{0,\Delta}^{1}(J)$, using lemma \ref{max} and proposition \ref{wirt},
	$$\psi(u)\ge\frac{\alpha}{2}\Arrowvert u\Arrowvert^{2}-\left(M\delta \sum_{j=1}^{p}B_{j}+B\ M\sqrt{\frac{T}{m\lambda_{1}}}\right)\Arrowvert u\Arrowvert.$$	
	This implies that $\lim_{\Arrowvert u\Arrowvert\to\infty}\psi(u)=+\infty$, and $\psi$ is coercive. Hence, $\psi$ has a minimum, which is a critical point of $\psi$.\qed
	\end{proof}

We will apply the Mountain Pass Theorem in order to obtain at least two critical point of $\psi$.

 Suppose that $X$ is a Banach space (in particular a Hilbert space) and $\phi:X\to\R$ differentiable and $c\in\R$. We say that $\phi$ satisfies the Palais-Smale condition if every bounded sequence $\{u_{k}\}$ in the space $X$ such that $lim_{k\to\infty}\phi'(u_{k})=0$ contains a convergent subsequence. 

\begin{teor}{(Mountain Pass Theorem)}\label{mpt}
Let $\phi\in\mathcal{C}^{1}$ such that satisfies the Palais-Smale condition. Assume that there exists $u_{0}, u_{1}\in X$ and a bounded neighbourhood $\Omega$ of $u_{0}$ such that $u_{1}\notin\Omega$ and
\begin{displaymath}
inf\{\phi(u):u\in\partial\Omega\}>\max\{\phi(u_{0}),\phi(u_{1})\}
\end{displaymath}
Then there exists a critical point $u^{*}$ of $\phi$.

\end{teor}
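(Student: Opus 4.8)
The statement is the classical Ambrosetti--Rabinowitz Mountain Pass Theorem (as in \cite{r}), and the plan is to prove it by a minimax argument combined with a deformation lemma. First I would introduce the class of admissible paths
\[
\Gamma=\{\gamma\in\mathcal{C}([0,1],X):\gamma(0)=u_{0},\ \gamma(1)=u_{1}\},
\]
which is nonempty because $X$ is a Banach space, hence arcwise connected, and define the candidate critical value
\[
c=\inf_{\gamma\in\Gamma}\ \max_{s\in[0,1]}\phi(\gamma(s)).
\]

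The first substantive step is the \emph{separation estimate} $c\ge\beta:=\inf\{\phi(u):u\in\partial\Omega\}$. Since $\Omega$ is a neighbourhood of $u_{0}$ and $u_{1}\notin\Omega$, a standard connectedness argument on the interval shows that every $\gamma\in\Gamma$ meets $\partial\Omega$ at some parameter $s_{0}$, whence $\max_{s}\phi(\gamma(s))\ge\phi(\gamma(s_{0}))\ge\beta$; taking the infimum over $\gamma$ gives $c\ge\beta$. By hypothesis $\beta>\max\{\phi(u_{0}),\phi(u_{1})\}$, and $c<+\infty$ since $\max_{s}\phi(\gamma(s))$ is finite for any single path, so $c$ is a real number strictly above the endpoint values of $\phi$.

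The second step is to show $c$ is a critical value of $\phi$, arguing by contradiction. Suppose $K_{c}=\{u\in X:\phi(u)=c,\ \phi'(u)=0\}=\emptyset$, and fix $\bar\varepsilon>0$ with $c-\bar\varepsilon>\max\{\phi(u_{0}),\phi(u_{1})\}$. Because $\phi\in\mathcal{C}^{1}$ and satisfies the Palais--Smale condition, the quantitative deformation lemma yields $\varepsilon\in(0,\bar\varepsilon)$ and a continuous $\eta:[0,1]\times X\to X$ with $\eta(0,\cdot)=\mathrm{id}$, with $\eta(t,u)=u$ whenever $|\phi(u)-c|\ge\bar\varepsilon$, and with $\eta\big(1,\{\phi\le c+\varepsilon\}\big)\subset\{\phi\le c-\varepsilon\}$. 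Picking $\gamma\in\Gamma$ with $\max_{s}\phi(\gamma(s))<c+\varepsilon$ and setting $\tilde\gamma=\eta(1,\gamma(\cdot))$, the endpoint property of $\eta$ (applicable since $\phi(u_{0}),\phi(u_{1})<c-\bar\varepsilon$) gives $\tilde\gamma\in\Gamma$, while $\max_{s}\phi(\tilde\gamma(s))\le c-\varepsilon<c$, contradicting the definition of $c$. Hence $K_{c}\ne\emptyset$, and any $u^{*}\in K_{c}$ is the required critical point.

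I expect the main obstacle to be the correct form of the deformation lemma under the \emph{bounded}-sequence version of the Palais--Smale condition adopted here (every bounded sequence with $\phi'(u_{k})\to 0$ has a convergent subsequence). The construction of $\eta$ through a locally Lipschitz pseudo-gradient field uses only $\phi\in\mathcal{C}^{1}$; the Palais--Smale hypothesis enters exactly to preclude escape near the level $c$, and with the bounded version one must check that the flow lines defining $\eta$ remain in a fixed bounded set, so that the failure of the deformation produces a \emph{bounded} Palais--Smale sequence at level $c$, hence by hypothesis an element of $K_{c}$. An equivalent and more transparent route is to apply Ekeland's variational principle on $(\Gamma,d)$ with $d(\gamma,\eta)=\max_{s}\|\gamma(s)-\eta(s)\|$ to the lower semicontinuous functional $\Psi(\gamma)=\max_{s}\phi(\gamma(s))$, extract an almost-minimizing path, and select on its peak set $\{s:\phi(\gamma(s))=\Psi(\gamma)\}$ points $u_{n}$ with $\phi(u_{n})\to c$ and $\|\phi'(u_{n})\|\to 0$; ensuring these $u_{n}$ stay bounded is the one genuinely delicate point, after which the bounded Palais--Smale condition immediately yields $u^{*}$.
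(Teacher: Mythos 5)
The paper does not actually prove this statement: it is quoted as a classical tool (the Ambrosetti--Rabinowitz theorem, see \cite{r,ma}) and used as a black box in the proof of Theorem \ref{teoprin}, so there is no in-paper argument to compare yours against. Your outline is the standard textbook proof --- minimax over the path class $\Gamma$, the separation estimate $c\ge\inf\{\phi(u):u\in\partial\Omega\}$ via connectedness of $[0,1]$, and a quantitative deformation lemma to rule out $K_{c}=\emptyset$ --- and for the classical statement, in which the Palais--Smale condition is imposed on all sequences with $\{\phi(u_{k})\}$ bounded and $\phi'(u_{k})\to 0$, this scheme is complete and correct.

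The genuine gap is the one you flag yourself and then leave open: the hypothesis as written in the paper is only the \emph{bounded-sequence} Palais--Smale condition (every \emph{bounded} $\{u_{k}\}$ with $\phi'(u_{k})\to 0$ has a convergent subsequence), and under that hypothesis alone your argument does not close. The deformation lemma fails at level $c$ precisely when there exists a sequence $u_{k}$ with $\phi(u_{k})\to c$ and $\phi'(u_{k})\to 0$, but nothing in the pseudo-gradient construction forces that sequence to be bounded, so the stated hypothesis cannot be invoked on it; and this is not a repairable technicality, since there are smooth functionals with the mountain-pass geometry satisfying the bounded-sequence condition for which $c$ is not a critical value (the standard counterexamples have only unbounded Palais--Smale sequences at level $c$). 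The same objection applies to your alternative Ekeland route: the almost-critical points extracted from near-optimal paths need not stay in a fixed bounded set. So either the theorem must be read with the classical Palais--Smale condition --- which is almost certainly what the authors intend, since in their application they first prove that every Palais--Smale sequence is bounded and only then extract a convergent subsequence --- or an additional hypothesis guaranteeing boundedness of Palais--Smale sequences at level $c$ must be added. As a proof of the statement as literally written, your argument is incomplete at exactly the point you identify.
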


	\begin{teor}\label{teoprin}
	Suppose that $\lambda>\frac{-m\lambda_{1}}{M}$, then the problem (NP) has at least two solutions if the following conditions hold:
	\begin{itemize}
	\item [$(H_{1})$] There exist constants $\eta>2$ and $\gamma>0$, such that for all $(t,u)\in\lbrack 0,T\rbrack_{\mathbb{T}}\times\mathbb{R}$, $\arrowvert u\arrowvert\ge\gamma$
	\begin{displaymath}
	0<\eta F(t,u)\le uf(t,u),\quad 0<\eta\int_{0}^{u}I_{j}(\xi)d\xi\le uI_{j}(u),
	\end{displaymath} 
	where $j=1,2,\cdots,p$.
	\item [$(H_{2})$] There exists a positive $s\ge\eta$ such that $f(t,u)=o(\arrowvert u\arrowvert^{s})$ and $I_{j}(u)=o(\arrowvert u\arrowvert^{s})$ uniformly for $t\in\lbrack 0,T\rbrack_{\mathbb{T}}$ as $\arrowvert u\arrowvert\to\infty$, $j=1,2,\cdots,p$.
	\item [$(H_{3})$]$f(t,u)=o(\arrowvert u\arrowvert)$ and $I_{j}(u)=o(\arrowvert u\arrowvert)$ uniformly for $t\in\lbrack 0,T\rbrack_{\mathbb{T}}$ as $\arrowvert u\arrowvert\to 0$, $j=1,2,\cdots,p$.
\end{itemize}
	\end{teor}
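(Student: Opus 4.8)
The plan is to apply the Mountain Pass Theorem (Theorem~\ref{mpt}) to the functional $\psi$ defined in (\ref{psi}), and to obtain the \emph{second} solution as the mountain pass critical point, the first being the local minimum at the origin that $(H_3)$ produces. Since $\psi$ is already known to be continuously differentiable and weakly lower semi-continuous by Lemma~\ref{sci}, the three things left to check are: (a) the Palais--Smale condition, (b) the geometry near $u_0=0$, i.e. that $\psi(0)=0$ and $\psi$ stays bounded below by a positive constant on a small sphere $\partial\Omega=\{\|u\|=\varrho\}$, and (c) the existence of a far-away point $u_1\notin\Omega$ with $\psi(u_1)\le 0$ (indeed $\psi(u_1)<\inf_{\partial\Omega}\psi$). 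Note $\psi(0)=0$ because $F(t,0)=0$ and $\int_0^0 I_j=0$, so once (b) and (c) hold with the strict inequality, $\max\{\psi(u_0),\psi(u_1)\}\le 0<\inf_{\partial\Omega}\psi$ and the theorem applies.

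For the \emph{local geometry} (b), I would use $(H_3)$: given $\varepsilon>0$ there is $\rho_\varepsilon>0$ with $|f(t,u)|\le\varepsilon|u|$ and $|I_j(u)|\le\varepsilon|u|$ for $|u|\le\rho_\varepsilon$, hence $|F(t,u)|\le\tfrac{\varepsilon}{2}|u|^2$ and $|\int_0^u I_j|\le\tfrac{\varepsilon}{2}|u|^2$ in that range. Using Lemma~\ref{max} ($\|u\|_0\le\delta\|u\|$), for $\|u\|$ small enough that $\|u\|_0\le\rho_\varepsilon$ one gets
\begin{displaymath}
\psi(u)\ge\frac{\alpha}{2}\|u\|^2-\frac{\varepsilon}{2}M\sum_{j=1}^{p}|u(t_j)|^2-\frac{\varepsilon}{2}M\int_0^T|u^\sigma|^2\Delta t
\ge\frac{\alpha}{2}\|u\|^2-\frac{\varepsilon M}{2}\Bigl(p\delta^2+\frac{1}{\lambda_1}\Bigr)\|u\|^2,
\end{displaymath}
using Lemma~\ref{max} for the impulsive terms and Proposition~\ref{wirt} for the integral term. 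Choosing $\varepsilon$ small so that the bracketed coefficient is $<\alpha/4$, we obtain $\psi(u)\ge\tfrac{\alpha}{4}\|u\|^2>0$ on the sphere $\|u\|=\varrho$ for any sufficiently small $\varrho$; set $\Omega=\{\|u\|<\varrho\}$.

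For the \emph{far point} (c), I would exploit $(H_1)$, the superquadraticity (Ambrosetti--Rabinowitz) condition. Integrating $0<\eta F(t,u)\le uf(t,u)$ for $|u|\ge\gamma$ gives the standard lower bound $F(t,u)\ge c_1|u|^\eta-c_2$ for constants $c_1>0,c_2\ge0$ uniform in $t$, and similarly $\int_0^u I_j(\xi)d\xi\ge -c_2'$ (bounded below). Then, fixing any $w\in H_{0,\Delta}^1(J)\setminus\{0\}$ and evaluating $\psi(rw)$ for $r\to+\infty$,
\begin{displaymath}
\psi(rw)\le\frac{\beta}{2}r^2\|w\|^2+pC-c_1 m\,r^\eta\int_0^T|w^\sigma|^\eta\Delta t+c_2 mT,
\end{displaymath}
where $A(rw,rw)\le\beta r^2\|w\|^2$ by the sandwich lemma and $e_g(t,0)\ge m>0$; since $\eta>2$ this $\to-\infty$, so for $r$ large $u_1:=rw$ satisfies $u_1\notin\Omega$ and $\psi(u_1)\le 0<\inf_{\partial\Omega}\psi$. (Condition $(H_2)$ is not needed for this step but supplies the growth control used in the Palais--Smale argument.)

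The \emph{main obstacle} is the Palais--Smale condition. Given a bounded sequence $\{u_k\}$ with $\psi'(u_k)\to 0$, boundedness in the Hilbert space gives a weakly convergent subsequence $u_k\rightharpoonup u$; by Proposition~\ref{cj} (compact-type embedding into $\mathcal C(J)$, which holds since $J$ is a compact time-scale interval) one has $u_k\to u$ uniformly, hence the impulsive terms $I_j(u_k(t_j))\to I_j(u(t_j))$ and, by continuity of $f$ and dominated convergence (using $(H_2)$ or just boundedness on the uniformly bounded set $\{u_k\}$), $\int_0^T e_g(t,0)f(t,u_k^\sigma)(u_k-u)^\sigma\Delta t\to 0$ and likewise with $u_k^\sigma$ replaced appropriately. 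Writing out $(\psi'(u_k)-\psi'(u),u_k-u)$ and isolating the principal part, the leading term is $A(u_k-u,u_k-u)\ge\alpha\|u_k-u\|^2$, while all remaining terms tend to $0$ by the uniform convergence; therefore $\|u_k-u\|\to 0$, giving strong convergence. I would be careful that the definition of Palais--Smale used here (Theorem~\ref{mpt}) only requires \emph{bounded} sequences, so I do not need the usual a priori boundedness argument from $(H_1)$ — this is exactly why the hypotheses are stated the way they are. With (a), (b), (c) in hand, Theorem~\ref{mpt} yields a critical point $u^*$ with $\psi(u^*)\ge\inf_{\partial\Omega}\psi>0=\psi(0)$, so $u^*\ne 0$; together with the trivial observation that $0$ is itself a critical point (since $f(t,0)=0$, $I_j(0)=0$ — or, if not assumed, replacing $0$ by the minimizer produced inside $\overline\Omega$ via Lemma~\ref{zeim}), $(NP)$ has at least two solutions. \qed
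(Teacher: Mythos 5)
Your overall strategy coincides with the paper's: a local minimum inside a small ball via Lemma \ref{zeim} (or the origin itself), a far point $u_{1}$ from the Ambrosetti--Rabinowitz lower bound $F(t,u)\ge a_{1}|u|^{\eta}-a_{2}$ obtained by integrating $(H_{1})$, the mountain-pass geometry from the smallness of $f$ and $I_{j}$ near $0$, and a second critical point from Theorem \ref{mpt}. The minor differences (you use only $(H_{3})$ on a small sphere where the paper combines $(H_{2})$--$(H_{3})$ into the global bound $\varepsilon|u|+C_{1}(\varepsilon)|u|^{s}$ and computes an explicit radius $\rho_{0}$; you identify the first solution with $0$ where the paper takes the minimizer $u_{0}$ of $\psi$ on $\bar{B}_{\rho_{0}}$) are harmless, and your fallback for the first solution is exactly the paper's argument.

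The genuine gap is in the Palais--Smale verification. You explicitly decline to prove that Palais--Smale sequences are bounded, on the grounds that the definition preceding Theorem \ref{mpt} only quantifies over \emph{bounded} sequences. That definition is evidently a misstatement: the paper's own verification begins by assuming that $\{\psi(u_{k})\}$ (the sequence of \emph{values}) is bounded and $\psi'(u_{k})\to 0$, and then \emph{proves} that $\Arrowvert u_{k}\Arrowvert$ is bounded via the estimate
\begin{displaymath}
\psi(u_{k})-\frac{1}{\eta}(\psi'(u_{k}),u_{k})\ \ge\ \Bigl(\frac{1}{2}-\frac{1}{\eta}\Bigr)\alpha\Arrowvert u_{k}\Arrowvert^{2}+\Gamma_{1}+\Gamma_{2},
\end{displaymath}
where $\Gamma_{1},\Gamma_{2}$ are bounded below precisely because of $(H_{1})$ (splitting $J$ into the sets where $|u_{k}|<\gamma$ and $|u_{k}|\ge\gamma$). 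This is the standard compactness form of the condition that the Mountain Pass Theorem actually requires: without a priori boundedness, the minimax level could be attained only along an unbounded sequence and no critical point need exist. It is also the one place in the compactness argument where the superquadraticity hypothesis $(H_{1})$ is used, so omitting it removes the step that makes the hypotheses of the theorem do their work. The remainder of your compactness argument (weak convergence, uniform convergence through the embedding into $\mathcal{C}(J)$, and isolating $A(u_{k}-u,u_{k}-u)\ge\alpha\Arrowvert u_{k}-u\Arrowvert^{2}$) is fine and is in fact cleaner than the paper's norm-convergence computation; you should simply prepend the boundedness estimate above to it.
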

	\begin{proof}
	
	From $(H_{2})$, $(H_{3})$ and the continuities of $f$ and $I_{j}$, is easy to see that for any $ \varepsilon>0$ and $(t,u)\in\lbrack 0,T\rbrack_{\mathbb{T}}\times\mathbb{R}$, there exist $C_{1}( \varepsilon)>0$ y $C_{1j}>0$ such that
	\begin{displaymath}
	\arrowvert f(t,u)\arrowvert\le \varepsilon\arrowvert u\arrowvert + C_{1}( \varepsilon)\arrowvert u\arrowvert^{s},
	\end{displaymath} 
	\begin{displaymath}
	\arrowvert I_{j}(u)\arrowvert\le \varepsilon\arrowvert u\arrowvert + C_{1j}( \varepsilon)\arrowvert u\arrowvert^{s}.
	\end{displaymath}
	
	Hence, for any $ \varepsilon>0$ and $(t,u)\in\lbrack 0,T\rbrack_{\mathbb{T}}\times\mathbb{R}$, we have
	\begin{equation}
	\label{F}
	F(t,u)\ \le\ \int_{0}^{\arrowvert u\arrowvert}\left\lbrack \varepsilon\xi +C_{1}( \varepsilon)\xi^{s}\right\rbrack d\xi\ \le\ \frac{ \varepsilon}{2}\arrowvert u\arrowvert^{2}+C_{2}( \varepsilon)\arrowvert u\arrowvert^{s+1},
	\end{equation}
	
	\begin{equation}
	\label{I}
	\int_{0}^{u}I_{j}(\xi)d\xi\ \le\ \int_{0}^{\arrowvert u\arrowvert}\lbrack \varepsilon\xi +C_{1j}( \varepsilon)\xi^{s}\rbrack d\xi\ \le\ \frac{ \varepsilon}{2}\arrowvert u\arrowvert^{2}+C_{2j}( \varepsilon)\arrowvert u\arrowvert^{s+1},
	\end{equation}
	
	where $\displaystyle C_{2}( \varepsilon)=\frac{C_{1}( \varepsilon)}{s+1}$ and $\displaystyle C_{2j}( \varepsilon)=\frac{C_{1j}( \varepsilon)}{s+1}$
	
	From condition $(H_{1})$, there hold:
	\begin{displaymath}
	\frac{\eta}{u}\le\frac{f(t,u)}{F(t,u)},\ \forall u\ge\gamma,\qquad\frac{\eta}{u}\ge\frac{f(t,u)}{F(t,u)},\ \forall u\le -\gamma.
	\end{displaymath}
	
	Integrating the above two inequalities with respect to $u$ on $\lbrack \gamma,u\rbrack$ and $\lbrack u,-\gamma\rbrack$ respectively (in this case, these are integrals on $\mathbb{R}$), we have
	\begin{displaymath}
	\eta\ln\frac{u}{\gamma}\le\ln\frac{F(t,u)}{F(t,\gamma)}, \ \forall u\ge\gamma,\qquad\eta\ln\frac{\gamma}{-u}\ge\ln\frac{F(t,-\gamma)}{F(t,u)},\ \forall u\le -\gamma.
	\end{displaymath}
	That is,
	\begin{displaymath}
	F(t,u)\ \ge \ F(t,\gamma)\left(\frac{u}{\gamma}\right)^{\eta},\ \forall u\ge\gamma,\qquad F(t,u)\ge F(t,-\gamma)\left(\frac{-u}{\gamma}\right)^{\eta},\ \forall u\ge -\gamma.
	\end{displaymath}
	
	Thus there exist a constant $a_{1}>0$ such that $F(t,u)\ge a_{1}\arrowvert u\arrowvert^{\eta}$ for all $\arrowvert u\arrowvert\ge\gamma$.
	
	From the continuity of $F(t,u)$, there exist a constant $k>0$, such that
	\begin{displaymath}
	F(t,u)\ \ge \ -k\ge a_{1}\arrowvert u\arrowvert^{\eta}-a_{1}\gamma^{\eta}-k,\quad\forall\arrowvert u\arrowvert\le\gamma.
	\end{displaymath}
	
	Hence, we have
	\begin{equation}
	\label{F2}
	F(t,u)\ \ge \ a_{1}\arrowvert u\arrowvert^{\eta}-a_{2},\quad\forall (t,u)\in\lbrack 0,T\rbrack_{\mathbb{T}}\times\mathbb{R},
	\end{equation}
	where $a_{2}=a_{1}\gamma^{\eta}+k$.
	
	Similarly, there exist $a_{1j},a_{2j}>0$ such that
	\begin{equation}
	\label{I2}
	\int_{0}^{u}I_{j}(\xi)d\xi\ \ge \ a_{1j}\arrowvert u\arrowvert^{\eta}-a_{2j},\quad\forall u\in\mathbb{R}.
	\end{equation}

	Firstly, we apply Lemma \ref{zeim} to show that there exists $\rho$ such that $\psi$ has a local minimum $u_{0}\in B_{\rho}=\{u\in H_{0,\Delta}^{1}(J):\Arrowvert u\Arrowvert<\rho\}$.
	
	Since $H_{0,\Delta}^{1}(J)$ is a Hilbert space, is easy to deduce that $\bar{B_{\rho}}$ is bounded and weak sequentially closed. Lemma \ref{sci} has shown that $\psi$ is weak lower semi-continuous on $\bar{B_{\rho}}$ and, besides, $H_{0,\Delta}^{1}(J)$ is a reflexive Banach space. So by \ref{zeim} we can have this $u_{0}$ such that $\psi(u_{0})=\min\{\psi(u):u\in \bar{B_{\rho}}\}$
	
	Now we will show that $\psi(u_{0})=\min\{\psi(u):u\in\partial B_{\rho}\}$ for some $\rho=\rho_{0}$ 
	
	In fact, from (\ref{F}) y (\ref{I}):
$$	\begin{array}{lcl}
	\psi(u)&\ge&\displaystyle \frac{\alpha}{2}\Arrowvert u\Arrowvert^{2}-\sum_{j=1}^{p}e_{g}(t,0)\int_{0}^{u(t_{j})}I_{j}(t)dt-\int_{0}^{T}e_{g}(t,0)F(t,u^{\sigma}(t))\Delta t \\
	&\ge&\displaystyle \frac{\alpha}{2}\Arrowvert u\Arrowvert^{2}-\sum_{j=1}^{p}e_{g}(t,0)\left(\frac{\varepsilon}{2}\arrowvert u\arrowvert^{2}+C_{2j}( \varepsilon)\arrowvert u\arrowvert^{s+1}\right)\\
&&\displaystyle -\int_{0}^{T}e_{g}(t,0)\left(\frac{ \varepsilon}{2}\arrowvert u\arrowvert^{2}+C_{2}( \varepsilon)\arrowvert u^{\sigma}\arrowvert^{s+1}\right)
\ge \displaystyle \frac{\alpha}{2}\Arrowvert u\Arrowvert^{2}-M\ \frac{\varepsilon}{2}\ p\ \delta^{2}\Arrowvert u\Arrowvert^{2}\\
&&\displaystyle -\delta^{s+1}M\sum_{j=1}^{p}C_{2j}(\varepsilon)\Arrowvert u\Arrowvert^{s+1}-\frac{M\varepsilon}{2}\int_{0}^{T}\arrowvert u^{\sigma}(t)\arrowvert^{2}\Delta t\\
&&\displaystyle -MC_{2}(\varepsilon)\int_{0}^{T}\arrowvert u^{\sigma}(t)\arrowvert^{s+1}\Delta t
\ge \displaystyle\frac{\alpha}{2}\Arrowvert u\Arrowvert^{2}-M\ \frac{\varepsilon}{2}\ p\ \delta^{2}\Arrowvert u\Arrowvert^{2}\\
&&\displaystyle -\delta^{s+1}M\sum_{j=1}^{p}C_{2j}(\varepsilon) \ \Arrowvert u\Arrowvert^{s+1}-\displaystyle \frac{M\varepsilon}{2}\frac{1}{m\lambda_{1}}\Arrowvert u\Arrowvert^{2}\\
&&\displaystyle -MC_{2}(\varepsilon)\frac{T\ k^{s+1}\ c^{s+1}}{(\sqrt{m})^{s+1}}\Arrowvert u\Arrowvert^{s+1}.	
		\end{array}
$$
	Hence,
$$	\begin{array}{lcl}
	\psi(u)&\ge&\displaystyle\frac{\alpha-M\ \varepsilon\ (\frac{1}{m\lambda_{1}}+\delta^{2}\ p)}{2}\ \Arrowvert u\Arrowvert^{2}\\
\\
&&-M\left(\delta^{s+1}\sum_{j=1}^{p}C_{2j}(\varepsilon)+\frac{C_{2}(\varepsilon)\ T\ k^{s+1}\ c^{s+1}}{(\sqrt{m})^{s+1}}\right)\Arrowvert u\Arrowvert^{s+1}.
	\end{array}
$$
	
	We can choose,
	
$$	\varepsilon=\displaystyle \frac{\alpha}{2M\left(\frac{1}{m\lambda_{1}}+\delta^{2}p\right)},$$
$$ \displaystyle \rho_{0}=\left(\frac{\alpha}{8M\left(\frac{T\ k^{s+1}\ c^{s+1}}{(\sqrt{m})^{s+1}}\ C_{2}(\varepsilon)+\delta^{s+1}\sum_{j=1}^{p}C_{2j}(\varepsilon)\right)}\right)^{\frac{1}{s-1}}.$$

	For any $u\in\partial B_{\rho_{0}}$, $\Arrowvert u\Arrowvert=\rho_{0}$, we have $\psi(u)\ge\frac{\alpha}{8\rho_{0}^{2}}>0$. Besides, $\psi(u_{0})\le\psi(0)=0$. Then, $\psi(u)>\frac{\alpha}{8\rho_{0}^{2}}>\psi(0)\ge\psi(u_{0})$ for any $u\in\partial B_{\rho_{0}}$. So, $\psi(u_{0})<inf\{\psi(u):u\in\partial B_{\rho_{0}}\}$. Hence, $\psi$ has a local minimum $u_{0}\in B_{\rho_{0}}=\{u\in H_{0,\Delta}^{1}(J):\Arrowvert u\Arrowvert<\rho_{0}\}$
	
	Next, we will show that there exists $u_{1}$ with $\Arrowvert u_{1}\Arrowvert>\rho_{0}$ such that $\psi(u_{1})<inf\{\psi(u):u\in\partial B_{\rho_{0}}\}$.

From (\ref{F2}) and (\ref{I2}), 
	$$\begin{array}{lcl}
	\psi(u)&\le&\displaystyle\frac{\beta}{2}\Arrowvert u\Arrowvert^{2}-\sum_{j=1}^{p}e_{g}(t,0)\int_{0}^{u(t_{j})}I_{j}(t)dt-\int_{0}^{T}e_{g}(t,0)F(t,u^{\sigma}(t))\Delta t\\
	
&\le&\displaystyle\frac{\beta}{2}\Arrowvert u\Arrowvert^{2}-\sum_{j=1}^{p}e_{g}(t,0)\left(a_{1j}\arrowvert u(t_{j})\arrowvert^{\eta}-a_{2j}\right)-\int_{0}^{T}e_{g}(t,0)\left(a_{1}\arrowvert u^{\sigma}(t)\arrowvert^{\eta}-a_{2}\right)\\
	
&\le&\displaystyle\frac{\beta}{2}\Arrowvert u\Arrowvert^{2}-m\sum_{j=1}^{p}a_{1j}\arrowvert u(t_{j})\arrowvert^{\eta}+M\sum_{j=1}^{p}a_{2j}-ma_{1}\int_{0}^{T}\arrowvert u^{\sigma}(t)\arrowvert^{\eta}\Delta t+Ma_{2}T.
\end{array}$$
	
	Thus,
	\begin{displaymath}
	\psi(u)\le\frac{\beta}{2}\Arrowvert u\Arrowvert^{2}-m\sum_{j=1}^{p}a_{1j}\arrowvert u(t_{j})\arrowvert^{\eta}+M\sum_{j=1}^{p}a_{2j}-ma_{1}\Arrowvert u^{\sigma}\Arrowvert_{L_{\Delta}^{\eta}}^{\eta}+Ma_{2}T.
	\end{displaymath}
	
	For any $u\in H_{0,\Delta}^{1}(J)$ with $\Arrowvert u\Arrowvert=1$, we have 
	\begin{displaymath}
	\psi(Nu)\le\frac{\beta}{2}N^{2}-m\sum_{j=1}^{p}a_{1j}N^{\eta}\arrowvert u(t_{j})\arrowvert^{\eta}+M\sum_{j=1}^{p}a_{2j}-ma_{1}N^{\eta}\Arrowvert u^{\sigma}\Arrowvert_{L_{\Delta}^{\eta}}+Ma_{2}T, 
	\end{displaymath}
so, $\lim_{N\to\infty}\psi(Nu)=-\infty$
	since $\eta>2$. Then, there exists $N_{0}>\rho_{0}$, such that $\psi(N_{0}u)\le 0$.
	
	Hence, for the above $\rho_{0}$, there exists $u_{1}$ such that $\Arrowvert u_{1}\Arrowvert=N_{0}$ and $\psi(u_{1})<0$. 
	
	Then, we have $\max\{\psi(u_{0}),\psi(u_{1})\}<inf\{\psi(u):u\in\partial B_{T_{0}}\}$ 
	
	The next step is to show that $\psi$ satisfies the Palais-Smale condition.
	
	Let $\{\psi(u_{k})\}$ be a bounded sequence such that $\lim_{k\to\infty}\psi'(u_{k})=0$. Now we show that $\Arrowvert u_{k}\Arrowvert$ is bounded. By (\ref{psi}) we have
	
	\begin{equation}
	\label{uk}
\begin{array}{lcl}
	(\psi'(u_{k}),u_{k})&=&A(u_{k},u_{k})-\int_{0}^{T}e_{g}(t,0)f(t,u_{k}^{\sigma}(t))u_{k}^{\sigma}(t)\Delta t\\
\\
&&-\sum_{j=1}^{p}e_{g}(t,0)I_{j}(u_{k}(t_{j}))u_{k}(t_{j}).
\end{array}
	\end{equation}
Thus,
$$
\begin{array}{lcl}	
\displaystyle\psi(u_{k})-\frac{1}{\eta}(\psi'(u_{k}),u_{k})&=&\displaystyle\left(\frac{1}{2}-\frac{1}{\eta}\right)A(u_{k},u_{k})+\Gamma_{1}+\Gamma_{2}\\

&\ge&\displaystyle \left(\frac{1}{2}-\frac{1}{\eta}\right)\alpha\Arrowvert u_{k}\Arrowvert^{2}+\Gamma_{1}+\Gamma_{2},
\end{array}
$$
where 
	\begin{displaymath}
	\Gamma_{1}=\frac{1}{\eta}\int_{0}^{T}e_{g}(t,0)f(t,u_{k}^{\sigma}(t))u_{k}^{\sigma}(t)\Delta t-\int_{0}^{T}e_{g}(t,0)F(t,u_{k}^{\sigma}(t))\Delta t,
	\end{displaymath}
	\begin{displaymath}	\Gamma_{2}=\frac{1}{\eta}\sum_{j=1}^{p}e_{g}(t,0)I_{j}(u_{k}(t_{j}))u_{k}(t_{j})-\sum_{j=1}^{p}e_{g}(t,0)\int_{0}^{u_{k}(t_{j})}I_{j}(t)dt.
	\end{displaymath}
	
	Note that $J=\Upsilon_{1}^{k}\cup\Upsilon_{2}^{k}$, where $\Upsilon_{1}^{k}=\{t\in J: \arrowvert u_{k}(t)\arrowvert<\gamma\}$, $\Upsilon_{2}^{k}=\{t\in J:\arrowvert u_{k}(t)\arrowvert\ge\gamma\}$, and that there exists a constant $c$, such that
	\begin{equation}
	\label{Ffc}
	\arrowvert F(t,u^{\sigma}(t))\arrowvert\le c,\quad\arrowvert f(t,u^{\sigma}(t)u^{\sigma}(t))\arrowvert\le c,\quad\textrm{si}\quad\arrowvert u\arrowvert<\gamma,
	\end{equation}
	\begin{equation}
	\label{Ic}
	\arrowvert\int_{0}^{u(t_{j})}I_{j}(t)dt\arrowvert c,\quad\arrowvert I_{j}(u(t_{j}))u(t_{j})\arrowvert\le c,\quad\textrm{si}\quad\arrowvert u\arrowvert<\gamma.
	\end{equation}
	
	So by $(H_{1})$ and (\ref{Ffc}), we have
	
$$
\begin{array}{lcl}	
	\Gamma_{1}&=&\displaystyle \frac{1}{\eta}\int_{\Upsilon_{1}^{k}}e_{g}(t,0)f(t,u_{k}^{\sigma}(t))u_{k}^{\sigma}(t)\Delta t+\frac{1}{\eta}\int_{\Upsilon_{2}^{k}}e_{g}(t,0)f(t,u_{k}^{\sigma}(t))u_{k}^{\sigma}(t)\Delta t\\
	
&&-\displaystyle\int_{\Upsilon_{1}^{k}}e_{g}(t,0)F(t,u_{k}^{\sigma}(t))\Delta t-\int_{\Upsilon_{2}^{k}}e_{g}(t,0)F(t,u_{k}^{\sigma}(t))\Delta t\\
	
&\ge&\displaystyle -\frac{1}{\eta}\int_{\Upsilon_{1}^{k}}e_{g}(t,0)\arrowvert f(t,u_{k}^{\sigma}(t))u_{k}^{\sigma}(t)\arrowvert\Delta t-\int_{\Upsilon_{1}^{k}}e_{g}(t,0)\arrowvert F(t,u_{k}^{\sigma}(t))\arrowvert\Delta t\\
	
&&+\displaystyle\frac{1}{\eta}\int_{\Upsilon_{2}^{k}}e_{g}(t,0)f(t,u_{k}^{\sigma}(t))u_{k}^{\sigma}(t)\Delta t-\int_{\Upsilon_{2}^{k}}e_{g}(t,0) F(t,u_{k}^{\sigma}(t))\Delta t\\
	
	
	
	
&\ge& c'+c''=c_{1},
\end{array}
$$
	
	where $c'$, $c''$ and $c_{1}$ are constants (independent of $k$).
	
	Analogously, there exist a constant $c_{2}$ (independent of $k$), such that $\Upsilon_{2}^{k}\ge c_{2}$.

	Hence,
	$$
\begin{array}{lcl}
\psi(u_{k})&\ge& \displaystyle \left(\frac{1}{2}-\frac{1}{\eta}\right)\alpha\Arrowvert u_{k}\Arrowvert^{2}+\frac{1}{\eta}\left(\psi'(u_{k}),u_{k}\right)+\Gamma_{1}+\Gamma_{2}\\
&\ge&\displaystyle \left(\frac{1}{2}-\frac{1}{\eta}\right)\alpha\Arrowvert u_{k}\Arrowvert^{2}-\frac{1}{\eta}\Arrowvert\psi'(u_{k})\Arrowvert\Arrowvert u_{k}\Arrowvert+c_{1}+c_{2}
\end{array}
$$
	
	Since $\psi(u_{k})$ is bounded, we have $\{\Arrowvert u_{k}\Arrowvert\}_{k=1}^{\infty}$ is bounded sequence.
	
	Hence, there exists a subsequence $\{u_{k}\}$ (for simplicity denoted again by $\{u_{k}\}$) such that $\{u_{k}\}$ weakly converges to some $u$ in $H_{0,\Delta}^{1}(J)$. Then the sequence $\{u_{k}\}$ converges uniformly to $u$ in $\mathcal{C}(J)$.
	
	By (\ref{uk}), we have
	$$
\begin{array}{lcl}
\Arrowvert u_{k}\Arrowvert^{2}=&&\int_{0}^{T}e_{g}(t,0)\left(f(t,u_{k}^{\sigma})\ u_{k}^{\sigma}-\lambda \ u_{k}^{2}\right)\Delta t+(\psi'(u_{k}),u_{k})\\
\\
&+&\sum_{j=1}^{p}e_{g}(t,0)I_{j}(u_{k}(t_{j}))\ u_{k}(t_{j}).
\end{array}
$$

	So, we have 
	$$
\begin{array}{c}
\lim_{k\to\infty}\Arrowvert u_{k}\Arrowvert^{2}=\int_{0}^{T}e_{g}(t,0)\left(f(t,u^{\sigma})u^{\sigma}-\lambda u^{2}\right)\Delta t+\sum_{j=1}^{p}e_{g}(t,0)I_{j}(u(t_{j}))u(t_{j}).
\end{array}
$$

	Then $\Arrowvert u_{k}\Arrowvert$ converges in $H_{0,\Delta}^{1}(J)$. Since $H_{0,\Delta}^{1}(J)$ is a Hilbert space, and the sequence $\{u_{k}\}\in H_{0,\Delta}^{1}(J)$ satisfies $u_{k}\rightharpoonup u$, then $\{u_{k}\}$ converges to $u$, i.e., $u_{k}\to u$. $\psi$ satisfies the Palais-Smale condition.
	
	Now, by \ref{mpt}, there exists a critical point $u^{*}$. Therefore, $u_{0}$ and $u^{*}$ are two critical points of $\psi$, and they are classical solutions of (NPE). Hence, $u_{0}$ and $u^{*}$ are classical solutions of (NP). \qed
	
	\begin{ej}
	Let $\T=h\Z$ for $0<h<T$, $T\in h\Z$ and $t_{1}\in (0,T)$. Thus, $J=\lbrack 0,T\rbrack\cap h\Z$ and $J^{\kappa}=\lbrack 0,T-h\rbrack\cap h\Z$.
	Consider the following boundary value problem:
\begin{equation}
\label{example}
\left\{\begin{array}{lcl}
 -u^{\Delta \Delta}(t)+\alpha u^{\Delta}(t+h)+\lambda u(t+h)=tu^{5}(t+h);& &\textrm{$\Delta$-a.e.}\quad t\in J^{\kappa^{2}}\\
\noalign{\bigskip}  -(u^{\Delta}(t_{1}^{+})-u^{\Delta}(t_{1}^{-}))=u^{5}(t_{1})\\
\noalign{\bigskip}  u(0)=0=u(T).
\end{array}
\right .
\end{equation}

where $\alpha>0$ be constant.

We can see that $g(t)=\alpha$ is regressive and continuous. If we take $\eta=s=6$ and $\lambda>\frac{-\lambda_{1}}{(1+\alpha h)^{\frac{T}{h}}}$, by theorem \ref{teoprin}, the eq. (\ref{example}) has at least two solutions.
	\end{ej}
	
	\end{proof}
	
\bibliography{mibiblio}
\bibliographystyle{unsrt}

\end{document}